\newtheorem{theorem}{Theorem}[section]
\newtheorem{corollary}[theorem]{Corollary}
\newtheorem{lemma}[theorem]{Lemma}
\theoremstyle{remark}
\numberwithin{equation}{section}
\newcommand{\pfrak}{\mathfrak{p}}
\newcommand{\Acal}{\mathscr{A}}
\newcommand{\Fcal}{\mathscr{F}}
\newcommand{\Lcal}{\mathscr{L}}
\newcommand{\Mcal}{\mathscr{M}}
\newcommand{\Ocal}{\mathscr{O}}
\newcommand{\Xcal}{\mathscr{X}}
\newcommand{\Pro}{\mathbb{P}}
\newcommand{\Z}{\mathbb{Z}}
\newcommand{\C}{\mathbb{C}}
\newcommand{\Q}{\mathbb{Q}}
\newcommand{\R}{\mathbb{R}}
\newcommand{\Aut}{\mathrm{Aut}}
\newcommand{\Spec}{\mathrm{Spec}\,}
\newcommand{\Norm}{\mathrm{N}}
\newcommand{\rk}{\mathrm{rank}\,}
\newcommand{\Ar}{\mathrm{Ar}}
\newcommand{\Fal}{\mathrm{Fal}}
\newcommand{\Cot}{\mathrm{Cot}}
  \DeclareFontFamily{U}{wncy}{}
    \DeclareFontShape{U}{wncy}{m}{n}{<->wncyr10}{}
    \DeclareSymbolFont{mcy}{U}{wncy}{m}{n}
    \DeclareMathSymbol{\Sha}{\mathord}{mcy}{"58}
\begin{document}
\title[]{Effective Mordell for curves with enough automorphisms}

\author{Natalia Garcia-Fritz}
\address{ Departamento de Matem\'aticas,
Pontificia Universidad Cat\'olica de Chile.
Facultad de Matem\'aticas,
4860 Av.\ Vicu\~na Mackenna,
Macul, RM, Chile}
\email[N. Garcia-Fritz]{natalia.garcia@uc.cl}%

\author{Hector Pasten}
\address{ Departamento de Matem\'aticas,
Pontificia Universidad Cat\'olica de Chile.
Facultad de Matem\'aticas,
4860 Av.\ Vicu\~na Mackenna,
Macul, RM, Chile}
\email[H. Pasten]{hector.pasten@uc.cl}%

\thanks{N.G.-F. was supported by ANID Fondecyt Regular grant 1211004 from Chile. H.P. was supported by ANID Fondecyt Regular grant 1230507 from Chile.}
\date{\today}
\subjclass[2020]{Primary: 14G05; Secondary: 14G40, 11G50} %
\keywords{Effectivity, Mordell conjecture, Faltings theorem, rational points}%

\begin{abstract} We prove a completely explicit and effective upper bound for the N\'eron--Tate height of rational points of curves of genus at least $2$ over number fields, provided that they have enough automorphisms with respect to the Mordell--Weil rank of their jacobian. Our arguments build on Arakelov theory for arithmetic surfaces. Our bounds are practical, and we illustrate this by explicitly computing the rational points of a certain genus $2$ curve whose jacobian has Mordell--Weil rank $2$.
\end{abstract}

\maketitle



\section{Introduction} 

\subsection{Height bounds in Mordell's conjecture} 

In 1922, Mordell conjectured that if $X$ is a smooth projective curve over a number field $K$ with genus $g\ge 2$, then the set of $K$-rational points $X(K)$ is finite. This was proved by Faltings in 1983, see \cite{FaltingsMordell}. Another proof was later given by Vojta in 1991 \cite{VojtaMordell} and explained in more classical terms by Bombieri \cite{Bombieri}. More recently, another proof was given by Lawrence and Venkatesh \cite{LawrenceVenkatesh}. However, all these proofs are \emph{ineffective}: they give no height bound for the rational points (or at least an algorithm) that could, in principle, allow their determination. See Section \ref{SecEffectiveHistory} for a discussion of the current attempts to effectivity. 

Our main result (Theorem \ref{ThmMain}) proves the effective Mordell conjecture for curves having sufficiently many automorphisms, with respect to the Mordell--Weil rank of their jacobian. This result gives completely explicit bounds for the height of the rational points of such curves.  As a proof of concept, we use our height bounds to compute all the rational points of the genus $2$ curve
\begin{equation}\label{EqnCurveIntro}
y^2=x^6+x^4+x^2+1
\end{equation}
whose jacobian has Mordell-Weil rank $2$ and has $8$ automorphisms defined over $\Q$; this turns out to be a sufficient number of automorphisms to apply our Theorem \ref{ThmMain}. Other examples are possible; for instance the hyperelliptic curve with LMFDB label 38416.a.614656.1 given by
$$
y^2 = x^6-3x^5-x^4+7x^3-x^2-3x+1
$$ 
has jacobian with Mordell--Weil rank $2$, and it has $12$ automorphisms defined over $\Q$ (see \cite{LMFDB}) which is more than enough in order to apply our results.

Although the specific curve \eqref{EqnCurveIntro} can be analyzed by other methods such as elliptic curve Chabauty \cite{FlynnWetherell},  Manin--Demjanenko \cite{KuleszMateraSchost}, or quadratic Chabauty \cite{BalakrishnanDogra, BianchiPadurariu}, the computations are particularly convenient and we hope they will illustrate how to apply our main results in general.

Before stating our main results in a precise form, we need to introduce some notation.

Let $r\ge 1$ and $n\ge 2$ be integers. For each set $U$ of $n$ different unit vectors in $\R^r$, let $\alpha(U)$ be the smallest angle between two different elements of $U$, and let $\theta(r,n)$ be the supremum of these $\alpha(U)$ as $U$ varies. Given an integer $g\ge 2$ define
$$
\tau(g,r,n)=\begin{cases}
\cos\theta(r,n) -1/g& \mbox{ if } r\ge 2\\
1-1/g& \mbox{ if } r= 1.
\end{cases}
$$
 We will be interested in the cases when $n\ge 3$ and $\tau=\tau(g,r,n)>0$. When $r=1$ and $n\ge 3$ we always have $\tau=1-1/g>0$. On the other hand, here is a table for $r=2$, $g=2$, and the first few values of $\tau$ (truncated to $2$ decimal places):
$$
\begin{array}{c|cccccccccc}
n & 3&4&5&6&7&8&9&10&11&12\\ \hline
\tau & -1 & -0.5& -0.19& 0& 0.12&  0.20& 0.26& 0.30 & 0.34 &0.36
\end{array}
$$
This is because for $r=2$ the angle $\theta(2,n)$ is computed when $U$ consists of the vertices of the regular $n$-gon. (The quantity $\tau$ is closely related to spherical codes.)

In addition to the quantity $\tau(g,r,n)$, we need some Arakelov-theoretical invariants at archimedean and non-archimedean places. 

From now on we fix the following set-up. Let $K$ be a number field, let $X$ a geometrically integral, smooth, projective curve over $K$ of genus $g\ge 2$ whose group of automorphisms defined over $K$ is $G=\Aut_K(X)$. Let $J$ be its jacobian, let $\omega$ be the canonical class of $X$ and consider the map $j:X\to J$ given by the rule $P\mapsto [\omega - 2(g-1)P]$. Let $\hat{h}$ be the N\'eron--Tate height on $J$, normalized to $K$, associated to $2\Theta$ where $\Theta$ is the theta divisor class on $J$.

Given a prime $\pfrak$ in $O_K$, we will define in Section \ref{SecPhip} a quantity $\phi_\pfrak(X)$ which comes from intersection theory on the fibre at $\pfrak$ of the minimal regular model of $X$ over $O_K$. This number $\phi_\pfrak(X)$ is easily computed from such a model, and it vanishes whenever the special fibre at $\pfrak$ has only one component (in particular, for good reduction). Furthermore, given an embedding $v:K\to \C$ we let $X_v$ be the Riemann surface of the $\C$-points of $X$ via $v$, and we write $\delta(Y)$ for the Faltings $\delta$-invariant of a Riemann surface $Y$ of positive genus. 

We put together these invariants by defining the following quantity:
\begin{equation}\label{EqnDefM}
\begin{aligned}
M(X) =\, &  \frac{(g-1)^2}{3}\max\{6,g+1\}\sum_{v:K\to \C}\delta(X_v) \\
&+ 2(g+1)\sum_\pfrak\phi_\pfrak(X)\log \Norm\pfrak\\
&+  2[K:\Q] g (g-1)^2\left(  3g\log g + 16  \right).
\end{aligned}
\end{equation}

With this notation, our main result is:

\begin{theorem}[Main result]\label{ThmMain} Let $H\le G$ be a subgroup of the $K$-rational automorphism group of $X$, let $n=\#H$, and let $r=\rk J(K)$. If $\tau=\tau(g,r,n)>0$, then every $P\in X(K)$ with trivial $H$-stabilizer satisfies
$$
\hat{h}(j(P))\le \frac{M(X)}{2g\tau}.
$$
\end{theorem}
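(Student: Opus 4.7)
The plan is to combine a Mordell--Weil pigeonhole via spherical codes with Arakelov theory on the minimal regular model $\Xcal\to\Spec O_K$. \emph{Orbit step:} let $P\in X(K)$ have trivial $H$-stabilizer, so the $n$ points $\sigma(P)$, $\sigma\in H$, are pairwise distinct. Since any $\sigma\in\Aut_K(X)$ fixes the canonical class, $j(\sigma(P))=(\sigma^{-1})^*j(P)$; and $\sigma^*\Theta$ is algebraically equivalent to $\Theta$ (both parametrize effective divisors of degree $g-1$), so the N\'eron--Tate height $\hat h$ attached to $2\Theta$ is invariant under the induced $H$-action on $J(K)$. Hence $\hat h(j(\sigma(P)))=\hat h(j(P))$ for every $\sigma\in H$. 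Assume $\hat h(j(P))>0$ (otherwise the bound is trivial).

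\emph{Spherical-code step:} normalize the $n$ images $j(\sigma(P))$ to unit vectors in $J(K)\otimes\R$ (of dimension $r$) with the positive definite quadratic form $\hat h$. For $r\ge 2$ the definition of $\theta(r,n)$ supplies distinct $\sigma_1,\sigma_2\in H$ with $\cos\angle(j(\sigma_1 P),j(\sigma_2 P))\ge\cos\theta(r,n)$; for $r=1$ two of the $n\ge 3$ same-norm collinear vectors must coincide modulo torsion, which is the limiting case $\cos\theta\to 1$ and is what $\tau=1-1/g$ encodes. In either case, writing $Q_i=\sigma_i(P)$ and applying the parallelogram identity together with $\hat h(j(Q_i))=\hat h(j(P))$ yields
$$\hat h\bigl(j(Q_1)-j(Q_2)\bigr)\le 2\bigl(1-\cos\theta(r,n)\bigr)\,\hat h\bigl(j(P)\bigr).$$

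\emph{Arakelov translation:} by the Hriljac--Faltings theorem, $\hat h([D])=-(\phi(D),\phi(D))_{\Ar}$ for any degree-zero divisor $D$, where $\phi(D)$ is the canonical extension to $\Xcal$ orthogonal to every vertical fibre (the vertical corrections on a singular fibre at $\pfrak$ are precisely what the invariant $\phi_\pfrak(X)$ measures). Applied to $D=\omega-2(g-1)P$ and to $j(Q_1)-j(Q_2)=-2(g-1)(Q_1-Q_2)$, combined with arithmetic adjunction $(\bar\omega,\bar P)=-(\bar P,\bar P)$ for sections and the $H$-invariance of Arakelov intersection numbers (so $(\bar Q_i,\bar Q_i)=(\bar P,\bar P)$), the displayed inequality becomes a linear relation among $\hat h(j(P))$, the canonical self-intersection $(\bar\omega,\bar\omega)$, and the mutual intersection $(\bar Q_1,\bar Q_2)$. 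The algebra is driven by the identity $(g-1)-g(1-\cos\theta)=g\tau$, which is precisely what produces the factor $2g\tau$ in the denominator of the theorem.

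To finish one bounds the right-hand side by $M(X)$: the arithmetic Noether formula controls $(\bar\omega,\bar\omega)$ in terms of $\sum_v\delta(X_v)$, $\sum_\pfrak\phi_\pfrak(X)\log\Norm\pfrak$, and an absolute term in $[K:\Q]$, while $(\bar Q_1,\bar Q_2)$ decomposes into a finite part bounded by $\phi_\pfrak$-contributions and an archimedean part bounded by explicit lower bounds on Arakelov Green's functions in terms of $\delta(X_v)$. The \emph{main obstacle} I anticipate is this last estimation: obtaining explicit Green's-function lower bounds with constants small enough to reproduce $M(X)$ exactly, in particular matching the factor $\max\{6,g+1\}$ in the $\delta$-coefficient and the absolute constant $3g\log g+16$. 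This likely requires a case split between small and large genus and invocation of refined Arakelov estimates (in the style of Zhang, Jorgenson--Kramer, or Wilms), with tight bookkeeping at each step. The spherical-code input is elementary; essentially all the real difficulty is on the Arakelov side.
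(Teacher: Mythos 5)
Your overall strategy is the same as the paper's: use $H$-invariance of $\hat h\circ j$ to get $n$ distinct orbit points of equal height, use the definition of $\theta(r,n)$ to find two of them at angle at most $\theta(r,n)$ (with the degenerate cases $r=1$ or coinciding images handled separately), and feed this into an explicit Mumford gap inequality proved by Arakelov theory on the minimal regular model via Faltings--Hriljac, arithmetic adjunction, Green-function estimates, and the fibral corrections measured by $\phi_\pfrak(X)$; your identity $(g-1)-g(1-\cos\theta)=g\tau$ is exactly the algebra that produces the denominator $2g\tau$. Also, the difficulty you single out as the main obstacle is not one: the explicit Green-function bound with the coefficient $\frac{1}{24g}\max\{6,g+1\}$ on $\delta$ and the additive term $\frac{3g}{4}\log g+4$ is available off the shelf (Wilms, which you mention), and the paper simply quotes it; no case split or new refinement is needed.

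The one genuine misstep is your plan to ``control $(\widehat{\omega}.\widehat{\omega})$ by the arithmetic Noether formula in terms of $\sum_v\delta(X_v)$, the $\phi_\pfrak$, and a term in $[K:\Q]$.'' That is not possible and not what is needed. In the gap expression the self-intersection $\widehat{\omega}^2$ enters with a positive coefficient on the side you must bound from below (equivalently, with a negative sign on the side you must bound from above), so the required input is a \emph{lower} bound, namely $(\widehat{\omega}.\widehat{\omega})\ge 0$ on the minimal regular model (Faltings in the semistable case, Sun in general); no upper bound is used. Noether's formula cannot deliver a bound purely in the quantities appearing in $M(X)$: it necessarily involves $\deg_R(\det\pi_*\widehat{\omega})=h_K(J)$ and the nonnegative fibre-singularity terms $\delta_\pfrak(\Xcal)$, neither of which occurs in $M(X)$, and in the direction you would need it the $\delta(X_v)$ would enter with the wrong sign. (In the paper, Noether's formula is used only for the auxiliary estimate bounding $\sum_v\delta(X_v)$ by $12h_K(J)+4g[K:\Q]\log(2\pi)$, which serves practical computations, not the proof of the main theorem.) A smaller gap: when $\hat h(j(P))=0$ the bound is ``trivial'' only if one knows $M(X)\ge 0$, which is not immediate because $\delta(X_v)$ can be negative; this is where Wilms' lower bound $\delta\ge-2g\log(2\pi^4)$ is invoked to show $M(X)>0$. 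With $(\widehat{\omega}.\widehat{\omega})\ge 0$ in place of Noether and this positivity check added, your outline coincides with the paper's proof.
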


Note that we don't need to compute $J(K)$; a good upper bound for $r$ is enough. There are formulas to approximate the Faltings $\delta$-invariant to high accuracy, see \cite{BostMestreMoretBailly} and more recently \cite{Wilms}. Furthermore, in Theorem \ref{ThmDeltaH} we show how to bound the $\delta$-invariants in terms of the Faltings height of the jacobian of $X$. This allows one to bound the  $\delta$-invariants in practice, as in many cases one can bound $h(J)$, e.g. when $J$ is isogenous to a product of elliptic curves. See Section \ref{SecComputations1} for details.


\subsection{Explicit Mumford gap principle} On the way of proving our main result, we also obtain an explicit version of Mumford's gap principle \cite{Mumford}, which can be of independent interest. For this we denote by $\langle-,-\rangle$ the N\'eron--Tate pairing on $J(K)$, namely
$$
2\langle x, y\rangle :=\hat{h}(x+y)-\hat{h}(x)-\hat{h}(y). 
$$
Also, we recall that $M(X)$ was defined in \eqref{EqnDefM}.
\begin{theorem}[Explicit gap principle] \label{ThmGap} For all pairs of different points $P,Q\in X(K)$ we have
$$
\hat{h}(j(P))+\hat{h}(j(Q))- 2g\langle j(P),j(Q)\rangle \ge -M(X).
$$
In particular, if $\hat{h}(j(P))$ and $\hat{h}(j(Q))$ are non-zero, and if $\theta$ is the angle between $j(P)$ and $j(Q)$ in $J(K)\otimes \R$ using the N\'eron--Tate pairing, then
$$
\cos \theta \le \frac{M(X)}{2g\sqrt{\hat{h}(j(P))\hat{h}(j(Q))}}+ \frac{1}{2g}\left(\sqrt{\frac{\hat{h}(j(P))}{\hat{h}(j(Q))}}+\sqrt{\frac{\hat{h}(j(Q))}{\hat{h}(j(P))}}\right).
$$ 
\end{theorem}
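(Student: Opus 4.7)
The plan is to translate the claimed inequality into an Arakelov-theoretic identity on the minimal regular model $\pi:\mathcal{X}\to\Spec O_K$ of $X$ and then bound each piece. By the Faltings--Hriljac formula, for degree-zero divisor classes $[D_1],[D_2]$ with ``flat'' Arakelov extensions $\overline D_1,\overline D_2$ (orthogonal to every vertical component), one has $\langle[D_1],[D_2]\rangle = -(\overline D_1\cdot\overline D_2)_{\mathrm{Ar}}$. For the Jacobi embedding $j(P)=[\omega-2(g-1)P]$ such an extension takes the form
$$\overline{j(P)} = \overline\omega - 2(g-1)\epsilon_P + W_P,$$
where $\epsilon_P$ is the horizontal section attached to $P$ and $W_P$ is a vertical divisor solving the flatness condition. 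Expanding bilinearly, using the adjunction formula $\overline\omega\cdot\epsilon_P=-\epsilon_P^2$, and applying the flatness relations $\overline\omega\cdot W_Q-2(g-1)\epsilon_P\cdot W_Q+W_P\cdot W_Q=0$ (and its $P\leftrightarrow Q$ analogue) to eliminate the mixed $\epsilon$--$W$ and $\overline\omega$--$W$ intersections, the computation should collapse to the clean identity
$$\hat h(j(P))+\hat h(j(Q))-2g\langle j(P),j(Q)\rangle = 2(g-1)\,\overline\omega^2 + 8g(g-1)^2(\epsilon_P\cdot\epsilon_Q)_{\mathrm{Ar}} + W_P^2+W_Q^2-2g(W_P\cdot W_Q).$$

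For $P\neq Q$ I then bound the three blocks separately. The intersection $(\epsilon_P\cdot\epsilon_Q)_{\mathrm{Ar}}$ decomposes as a sum of non-negative finite local intersection multiplicities (dropped freely since $P\neq Q$) plus archimedean Green's functions $\sum_{v:K\to\C} g_v(P_v,Q_v)$, the latter admitting an effective lower bound of the form $g_v(x,y)\ge -A(g)\,\delta(X_v)-B(g)$. For the vertical contribution, negative semidefiniteness of the intersection form on vertical divisors---via $(W_P+W_Q)^2\le 0$---gives $W_P^2+W_Q^2-2g(W_P\cdot W_Q)\ge(g+1)(W_P^2+W_Q^2)$, and the extremal property defining $\phi_\pfrak(X)$ yields $-W_{P,\pfrak}^2\le\phi_\pfrak(X)\log\Norm\pfrak$, producing the $-2(g+1)\sum_\pfrak\phi_\pfrak(X)\log\Norm\pfrak$ contribution exactly. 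The term $2(g-1)\overline\omega^2$ has positive coefficient and is either dropped via the positivity of the Arakelov canonical self-intersection or controlled through Noether's formula together with an effective lower bound for $h_{\mathrm{Fal}}(J)$. Calibrating the constants yields the first assertion. The second assertion is then immediate: dividing by $2g\sqrt{\hat h(j(P))\hat h(j(Q))}$ (valid because both heights are nonzero by hypothesis) and using $\cos\theta=\langle j(P),j(Q)\rangle/\sqrt{\hat h(j(P))\hat h(j(Q))}$ produces the displayed cosine bound.

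The main obstacle will be the numerical calibration in the preceding paragraph: producing an effective Arakelov Green's function estimate with constants matching the precise coefficients $(g-1)^2\max\{6,g+1\}/3$ of $\sum_v\delta(X_v)$ and $2g(g-1)^2(3g\log g+16)$ of $[K:\Q]$ in $M(X)$. The form $\max\{6,g+1\}$ strongly suggests that two different Green's-function bounds, optimal in disjoint genus regimes, will have to be combined. Absorbing $\overline\omega^2$ without leaking an $h_{\mathrm{Fal}}(J)$-dependence into the final constant is the secondary technical difficulty.
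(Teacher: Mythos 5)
Your proposal is correct and follows essentially the same route as the paper: the identical Faltings--Hriljac plus adjunction computation (with the fibral corrections $\Psi_P$, your $W_P$) yields the same identity $\hat{h}(j(P))+\hat{h}(j(Q))-2g\langle j(P),j(Q)\rangle = 2(g-1)\widehat{\omega}^2+8g(g-1)^2(D_P.D_Q)+\Psi_P^2+\Psi_Q^2-2g(\Psi_P.\Psi_Q)$, and the same three bounds (nonnegativity of $\widehat{\omega}^2$ on the minimal regular model, negative semidefiniteness on fibres together with the $\phi_\pfrak$ extremal bound giving $-2(g+1)\sum_\pfrak\phi_\pfrak(X)\log\Norm\pfrak$, and dropping the nonnegative finite part of $(D_P.D_Q)$) finish the argument as in the paper. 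The one item you flagged as the main obstacle, the numerical calibration of the archimedean term, is not actually an obstacle: it is supplied by the single explicit estimate of Wilms, $\gamma_v(P,Q)<\frac{1}{24g}\max\{6,g+1\}\delta(X_v)+\frac{3g}{4}\log g+4$ (Theorem \ref{ThmBdGreen}), which upon multiplication by $8g(g-1)^2$ and summation over embeddings yields the coefficients of $\sum_v\delta(X_v)$ and of $[K:\Q]$ in $M(X)$ verbatim, so no combination of two different Green's function bounds is needed.
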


This inequality provides gaps between rational points of $X$: when two such points have large height of similar size, their angle $\theta$ is bounded from below (i.e., $\cos\theta$ is bounded away from $1$).

The proof is an Arakelov-theoretical computation similar to that in \cite{SzpiroPeu}, although for our purposes we need to make everything explicit and we don't assume semistability. In the semistable case  our proof can be seen as bounding the ``Mumford constant'' introduced by Szpiro \cite{SzpiroPeu}.


\subsection{Other approaches to effective Mordell}\label{SecEffectiveHistory}

In the direction of effective Mordell there are three main approaches: The Chabauty--Coleman--Kim method \cite{Chabauty, Coleman, Kim}, the Manin--Demjanenko method \cite{Manin, Demjanenko} and modularity estimates \cite{MurtyPasten, Alpoge}. 

The Chabauty--Coleman method requires that the Mordell--Weil rank $r$ of the jacobian $J$ of $X$ be less than the genus $g$. Under this assumption it gives good bounds for the \emph{number} of rational points. The method of proof has been adapted to find, in many cases, the rational points of certain curves, see \cite{McCallumPoonen} for an introduction. See also \cite{BBBLMTVrecentpadic} for an exposition on non-abelian extensions originated in the work of Kim \cite{Kim} that, in some cases, can relax the condition $r<g$ leading to spectacular applications such as \cite{CursedCurve}. However, we remark that at present it is not known whether the Chabauty--Coleman--Kim method terminates when its hypothesis are satisfied. This is different from our results; when our hypotheses are satisfied, the search for rational points using our height bounds is guaranteed to terminate.

The Manin--Demjanenko method requires that $X$ admits several linearly independent maps to an abelian variety whose rank is less than the number of maps. It is effective in theory, but it is quite difficult to apply in practice, as the standard proof is not very explicit \cite{SerreMW}. Nevertheless, some examples have been successfully analyzed, see for instance \cite{GirardKulesz, Kulesz, KuleszMateraSchost}. Let us mention that a variation of the Manin--Demjanenko method has been studied in detail when $X$ is contained in a power of an elliptic curve of small rank, see for instance \cite{CVVappendixStoll} and the references therein. 

Finally, the third method (modularity) attempts to associate to each rational point of $X$ a modular abelian variety whose Faltings height can be bounded using the theory of automorphic forms. This was first used in the context of the unit equation by Murty--Pasten \cite{MurtyPasten} giving explicit practical bounds, then it was applied to other Diophantine equations involving $S$-integral points \cite{vonKanelMatschke} and most recently, it was used for rational points in some special curves \cite{Alpoge} although the estimates in this last work are not practical at present. 

Of these three methods, our Theorem \ref{ThmMain} is closer to the Manin--Demjanenko method although the proof is completely different, our bounds are explicit, and it does not seem (to the authors) that either contains the other.

In conditional grounds, it is known that an effective version of the $ABC$ conjecture implies the effective Mordell conjecture  in the form of height bounds for rational points \cite{Elkies}. Similarly, arithmetic canonical class inequalities with effective constants imply effective Mordell in the form of height bounds for rational points, see for instance \cite{SzpiroNumeriques, ParshinBMY} and Vojta's appendix in \cite{Lang}. In another direction, if one assumes the Hodge, Tate, and Fontaine--Mazur conjectures then there is an algorithm to compute $X(K)$, see \cite{AlpogeLawrence}; at present this conditional algorithm is far from practical.



\section{Metrics}

\subsection{Admissible metrics}\label{SecAdmissible}

Let $Y$ be a compact Riemann surface of genus $g\ge 1$. On $H^0(Y,\Omega^1_{Y/\C})$ we put the inner product
$$
( \alpha,\beta)  =\frac{i}{2}\int_Y \alpha\wedge \overline{\beta}.
$$

Let $\alpha_1,...,\alpha_g$ be an orthonormal basis for $H^0(Y,\Omega^1_{Y/\C})$. We define the $(1,1)$-form
$$
d\mu^{\Ar} = \frac{i}{2g}\sum_{j=1}^g \alpha_j\wedge\overline{\alpha_j}.
$$
This $(1,1)$-form is independent of the choice of orthonormal basis $\{\alpha_j\}_j$ and it defines a probability measure on $Y$. It is called the \emph{Arakelov $(1,1)$-form} on $Y$.

We follow the same setting developed in Section 3 of \cite{FaltingsCalculus}. In particular, there is the notion of admissible metric $\|-\|$ on a line sheaf $\Lcal$ on $Y$ (namely, its curvature is a scalar multiple of $d\mu^{\Ar}$), which always exists and it is unique up to a scalar factor. If $\Lcal=\Ocal(D)$ for a divisor $D$, then this metric is in fact unique under the normalization
$$
\int_Y \log \|1\| d\mu^{\Ar} =0
$$
where $1$ is the canonical meromorphic section of $\Ocal(D)$.

\subsection{Green functions} The Green function $G(P,Q)$ associated to the canonical $(1,1)$-form $\mu^{\Ar}$ is a function defined on $Y\times Y$ which is real valued symmetric and continuous, vanishing only along the diagonal to order $1$ and with a curvature normalization relative to $\mu^{\Ar}$. Its precise definition can be found in Section 3 of \cite{FaltingsCalculus}. The Green function can be thought as defining a distance between points $P,Q\in Y$. Furthermore, it gives a metric $\|-\|_P$ on $\Ocal(P)$ by defining 
$$
\|1\|_P(Q)=G(P,Q).
$$
This is the unique admissible normalized metric for $\Ocal(P)$ with respect to $\mu^{\Ar}$, in the sense of Section \ref{SecAdmissible}. 

In order to study the Arakelov intersection pairing later, for $P\ne Q$ one defines the \emph{canonical logarithmic Green function}
$$
\gamma(P,Q) = \log G(P,Q).
$$

\subsection{The canonical metric}\label{SecCanMetric} Consider the sheaf $\Omega^1_{Y/\C}$. For every point $P\in Y$ there is the residue isomorphism
$$
\Omega^1_{Y/\C}|_P\otimes \Ocal(P)|_P\to \C
$$
defined by the rule $dt\otimes t^{-1}\mapsto 1$, for any local parameter $t$ at $P$. Endowing $\C$ with the usual absolute value and $\Ocal(P)$ with its unique admissible normalized metric in the sense of Section \ref{SecAdmissible}, we obtain a metric on each fibre of $\Omega^1_{Y/\C}$. Using Green functions one proves the following (see \cite{FaltingsCalculus}):

\begin{theorem} The metrics on $\Omega^1_{Y/\C}|_P$ of the previous construction vary smoothly with $P$ and define a metric on $\Omega^1_{Y/\C}$. This metric is admissible.
\end{theorem}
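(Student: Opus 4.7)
The plan is to give the metric an explicit local description, deduce smoothness from the smoothness properties of the Green function across the diagonal, and then compute curvature to check admissibility.

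First, I would fix $P_0\in Y$ and a local coordinate $t$ on a small neighborhood $U$ of $P_0$ (centered so that $t(P_0)=0$). For each $P\in U$ the function $s_P:=1/(t-t(P))$ is a local section of $\Ocal(P)$ non-vanishing at $P$, and the residue isomorphism sends $dt|_P\otimes s_P|_P\mapsto 1$. By Section \ref{SecCanMetric} and the definition of the Green function, for $Q\ne P$ near $P$ one has
$$
\|s_P\|_P(Q)=\frac{G(P,Q)}{|t(Q)-t(P)|}.
$$
Thus the metric on $\Omega^1_{Y/\C}|_P$ is forced by
$$
\|dt|_P\|=\frac{1}{\|s_P\|_P(P)}=\lim_{Q\to P}\frac{|t(Q)-t(P)|}{G(P,Q)}.
$$

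Next, I would justify that $\|dt|_P\|$ depends smoothly on $P$. The key input is the standard fact that the function
$$
F(P,Q):=\gamma(P,Q)-\log|t(P)-t(Q)|
$$
extends to a $C^\infty$ function in a neighborhood of $\Delta\cap(U\times U)$ in $U\times U$; this follows because $G$ vanishes to first order along the diagonal with transverse direction detected by any local coordinate, which is exactly what \cite{FaltingsCalculus} uses to define $\gamma$. Hence $\|s_P\|_P(Q)=\exp F(P,Q)$ extends smoothly to $Q=P$ with positive value, and $\log\|dt|_P\|^2=-2F(P,P)$ is smooth in $P$. Since $dt$ trivializes $\Omega^1_{Y/\C}$ over $U$, this produces a smooth Hermitian metric on $\Omega^1_{Y/\C}$ globally, independent of the local coordinate chosen, by functoriality of the residue isomorphism.

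Finally, for admissibility I would compute the curvature of this metric. Locally the curvature form of $\Omega^1_{Y/\C}$ is $\tfrac{i}{\pi}\partial\bar\partial\bigl(-F(P,P)\bigr)$ (up to the normalization convention); the defining property of the Green function is that $\tfrac{i}{\pi}\partial_Q\bar\partial_Q\gamma(P,Q)=d\mu^{\Ar}(Q)$ away from the diagonal (with an analogous identity in $P$ by symmetry of $\gamma$), and the logarithmic singularity along the diagonal is absorbed by subtracting $\log|t(P)-t(Q)|$. Applying $\partial\bar\partial$ to the restriction of $F$ to the diagonal and using this curvature identity (the $\partial\bar\partial$ of $\log|t(P)-t(Q)|$ on the diagonal contributes zero as a smooth form) yields that the curvature of $\Omega^1_{Y/\C}$ is a constant multiple of $d\mu^{\Ar}$, which is the definition of admissibility. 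Matching total degree fixes the constant at $2g-2$, consistent with $\deg\omega=2g-2$.

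The main obstacle is step three: carefully separating the action of $\partial\bar\partial$ in the $P$-variable on the diagonal restriction $F(P,P)$ from the bi-variable $\partial_Q\bar\partial_Q$ identity satisfied by $\gamma$, and keeping track of normalization constants of $2\pi i$ so that both the curvature equation for $G$ and the residue normalization are compatible. The cleanest way to handle this, which I would adopt, is to work on the product $Y\times Y$: the family $\{\|\cdot\|_P\}_{P\in Y}$ assembles into a smooth Hermitian metric on the line sheaf $\Ocal(\Delta)$ over $Y\times Y\setminus\Delta$, extends smoothly across $\Delta$ after dividing by a local equation of the diagonal, and its restriction to $\Delta$ gives a smooth metric on the normal bundle $N_{\Delta/Y\times Y}\cong T_Y$, dual to the claimed metric on $\Omega^1_{Y/\C}$. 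Both smoothness and the curvature computation then become a single bi-variable statement about $\gamma$.
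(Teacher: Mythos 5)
Your construction of the metric and the smoothness argument are fine and follow exactly the route the paper points to (Faltings/Arakelov): locally $\|dt|_P\|=\exp(-F(P,P))$ with $F(P,Q)=\gamma(P,Q)-\log|t(P)-t(Q)|$ smooth across the diagonal, so the family of fibre metrics is smooth and coordinate-independent. The gap is in the admissibility step. The curvature of your metric at $P$ is (up to normalization) $\partial\bar\partial$ of the diagonal restriction $P\mapsto F(P,P)$, and by the chain rule this involves all four second derivatives of $F$ on $Y\times Y$ restricted to the diagonal: the two ``pure'' terms $\partial_P\bar\partial_P\gamma$ and $\partial_Q\bar\partial_Q\gamma$, which are indeed controlled by the defining curvature equation of the Green function (each contributes a copy of $d\mu^{\Ar}$), \emph{and} the mixed terms $\partial_P\bar\partial_Q\gamma$, $\partial_Q\bar\partial_P\gamma$ on the diagonal. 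These mixed terms are not determined by the single-variable identities you quote (note that the mixed $\partial_P\bar\partial_Q$ of $\log|t(P)-t(Q)|$ vanishes identically off the diagonal, so the singularity is not the issue); they are the mathematical heart of the proof. One needs the additional identity that off the diagonal $\partial_P\bar\partial_Q\,\gamma(P,Q)$ is given by the canonical (Bergman-type) kernel built from an orthonormal basis, essentially $\frac{\pi}{g}\sum_j \alpha_j(P)\wedge\overline{\alpha_j(Q)}$ in suitable normalization --- equivalently, that the full curvature of the Green metric on $\Ocal(\Delta)$ over $Y\times Y$ equals $p_1^*d\mu^{\Ar}+p_2^*d\mu^{\Ar}$ minus this kernel form (plus the delta current on $\Delta$). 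Restricting that kernel to the diagonal is what produces the extra $-2g\,d\mu^{\Ar}$ and hence proportionality of the curvature to $d\mu^{\Ar}$ with the correct constant.

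Because of this, your closing move ``matching total degree fixes the constant at $2g-2$'' is circular as written: degree considerations fix the constant only \emph{after} you know the curvature is a multiple of $d\mu^{\Ar}$, and that proportionality is precisely what the missing mixed-derivative identity supplies. Your suggestion to work on $Y\times Y$ with $\Ocal(\Delta)$ and restrict to the normal bundle of the diagonal is indeed the right framework (it is how Arakelov and Faltings argue, and how the paper's cited source proceeds), but in your write-up it is presented as a bookkeeping device for constants, whereas the bi-variable curvature formula for $(\Ocal(\Delta),G)$ --- proved, e.g., by showing $\partial_P\bar\partial_Q\gamma$ is harmonic in each variable and identifying it via the reproducing property of the kernel --- must be stated and established for the proof to close.
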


The metric on $\Omega^1_{Y/\C}$ thus obtained will be called the \emph{canonical metric}.

\subsection{Faltings $\delta$-invariant} Faltings introduced in p.402 of \cite{FaltingsCalculus} an important  numerical invariant $\delta(Y)$ of a Riemann surface $Y$ of genus $g\ge 1$, now called the \emph{Faltings $\delta$-invariant}. 

Let $\Mcal_g(\C)$ be the moduli space of genus $g$ compact Riemann surfaces and let $p_Y\in \Mcal_g(\C)$ be the point corresponding to $Y$. Intuitively, $\delta(Y)$ can be thought as $-\log$ of the distance of $p_Y$ to the boundary of $\Mcal_g(\C)$ in a suitable compactification. 

The invariant $\delta(Y)$ can very well take negative values. An explicit lower bound is provided by Wilms in Corollary 1.2 of \cite{Wilms}:

\begin{theorem}\label{ThmWilms} Let $Y$ be a compact Riemann surface of genus $g\ge 1$. Then
$$
\delta(Y)\ge -2g\log (2\pi^4).
$$
\end{theorem}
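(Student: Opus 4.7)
The plan is to deduce this lower bound from an explicit formula for $\delta(Y)$ combined with sign-definite estimates for each piece in the formula. In Faltings' original definition $\delta(Y)$ is the defect between the natural $L^2$-metric on $\det R\pi_*\omega$ and the Arakelov (Quillen) metric, and is therefore a priori only an abstract spectral quantity. Work of de Jong, Jorgenson--Kramer and Wilms has turned this into a closed expression that writes $\delta(Y)$ as (i) an explicit constant of the form $c\,g\,\log(2\pi)$, plus (ii) the $\mu^{\Ar}$-integral of $\log\|\vartheta\|$ with $\vartheta$ the Riemann theta function under its Faltings normalization, plus (iii) an energy integral built from the Bergman kernel and from the canonical logarithmic Green function $\gamma$.

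With such a formula available, the next step would be to bound each term from below. The theta piece is controlled using the well-known upper bound $\log\|\vartheta\|\le 0$ for the Faltings-normalized theta function, which turns $-\int\log\|\vartheta\|\,d\mu^{\Ar}$ into a nonnegative quantity and hence produces a favorable contribution to $\delta(Y)$. The Bergman/Green energy term is manifestly nonnegative, since it is an $L^2$-norm of a smooth $(1,1)$-form on a fibre product of copies of $Y$. After these two terms are dropped, only the explicit constant multiple of $g\log(2\pi)$ remains, and one has to check that the normalizations conspire to yield exactly $-2g\log(2\pi^4)$ on the right-hand side.

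The main obstacle is precisely this constant chase. Factors of $\log 2$ and $\log\pi$ enter from at least three different normalizations---those of $\vartheta$, of $d\mu^{\Ar}$, and of the Arakelov Green function $\gamma$---and a naive combination of known identities only gives a bound of the qualitative shape $\delta(Y)\ge -Cg$ for some unspecified $C$. The sharp value $2\pi^4$ is obtained by starting from a carefully chosen Arakelov Riemann--Roch type identity in which the nonconstant pieces have a definite sign, and then reading off the remaining universal constant. This bookkeeping, carried out in detail in Wilms' work, is what makes the inequality effective rather than merely qualitative, and is therefore the step that I expect to be the technical heart of the argument.
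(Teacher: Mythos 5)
You should first note that the paper itself contains no proof of this statement: Theorem \ref{ThmWilms} is quoted verbatim from Corollary 1.2 of Wilms \cite{Wilms}, so your proposal has to be measured against Wilms' argument. Your overall architecture is indeed the right one: Wilms' explicit formula has the shape $\delta(Y) = -24\int_{\mathrm{Pic}_{g-1}(Y)}\log\|\theta\|\,d\nu + 2\varphi(Y) - 8g\log(2\pi)$, where $\varphi(Y)$ is the Kawazumi--Zhang invariant (known to be nonnegative, which is your ``energy term'' step), and the bound is then a matter of controlling the theta integral.

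The genuine gap is in how you control that integral. The pointwise estimate $\log\|\theta\|\le 0$ for the Faltings-normalized theta function is false: already in genus $1$ with $\tau=it$ one has $\|\theta\|(0)=t^{1/4}\,|\theta(0;it)|\to\infty$ as $t\to\infty$. More importantly, even if one could simply discard the theta term by a sign argument, the formula would only give $\delta(Y)\ge -8g\log(2\pi)\approx -14.70\,g$, which is strictly weaker than the asserted $-2g\log(2\pi^4)\approx -10.54\,g$; so the constant $2\pi^4$ cannot emerge from sign considerations plus bookkeeping of normalizations, contrary to what your last paragraph suggests. What is actually needed is a strictly negative upper bound for the integral, namely $\int\log\|\theta\|\,d\nu\le -\tfrac{g}{4}\log 2$, which follows from the exact evaluation $\int\|\theta\|^2\,d\nu = 2^{-g/2}$ (a computation going back to Faltings and Moret-Bailly) combined with Jensen's inequality. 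Multiplying by $-24$ this contributes $+6g\log 2$, and $-8g\log(2\pi)+6g\log 2=-2g\log(2\pi^4)$, which is exactly how Wilms reaches the stated constant. As written, your proposal cannot produce the bound in the theorem.
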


The $\delta$-invariant can be used to bound Green functions, see Corollary 1.5 in \cite{Wilms}:

\begin{theorem}\label{ThmBdGreen} Let $Y$ be a compact Riemann surface of genus $g\ge 1$ and let $\gamma(P,Q)$ be its canonical logarithmic Green function. Then for all $P\ne Q$ in $Y$ we have
$$
\gamma (P,Q) < \frac{1}{24g}\max\{6,g+1\}\delta(Y) + \frac{3g}{4}\log g +4.
$$
\end{theorem}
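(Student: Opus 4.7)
The plan is to follow the strategy pioneered by Faltings and sharpened by Wilms, deriving the Green function bound from an analysis of the canonical Faltings norm of the Riemann theta function. The starting point is Faltings' fundamental identity from \cite{FaltingsCalculus}: for an effective divisor $D=P_1+\cdots+P_g$ on $Y$ with generic support, there is an equality of the shape
$$
-\log\|\vartheta\|\bigl(j_{P_0}(D)-\kappa\bigr) \;=\; \sum_{i<j}\gamma(P_i,P_j) \;-\; \tfrac{1}{8}\delta(Y) \;+\; c(g),
$$
where $\kappa$ is a fixed theta characteristic, $\|\vartheta\|$ denotes the canonical admissible metric on the theta bundle on the Jacobian, and $c(g)$ is an explicit universal constant. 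This identity is the key bridge between the analytic quantity $\gamma(P,Q)$ and the global invariant $\delta(Y)$.

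To extract a bound for $\gamma(P,Q)$ at a prescribed pair, I would integrate the identity over the remaining $g-2$ variables against the Arakelov probability measure $\mu^{\Ar}$. The normalization $\int_Y\gamma(P,\cdot)\,d\mu^{\Ar}=0$ forces the bulk of the cross terms $\sum_{i<j}\gamma(P_i,P_j)$ to drop out; the chosen pair $\gamma(P,Q)$ is preserved while everything else collapses to a controlled multiple of $\delta(Y)$ on one side and to an integrated $\log\|\vartheta\|$ term on the other. The integrated theta term is then bounded by its $L^\infty$-norm on $J$, and a classical estimate on Riemann theta series (using positivity of the imaginary part of the period matrix together with a Gaussian lattice-point count) gives an upper bound for $\sup_J\log\|\vartheta\|$ of shape $\tfrac34 g\log g+O(1)$, matching the lower-order terms in the statement.

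The main obstacle is obtaining the sharp prefactor $\max\{6,g+1\}/(24g)$ of $\delta(Y)$. The case-split reflects two regimes in the averaging procedure: in low genus the number of auxiliary points $g-2$ is too small to effectively average out the cross terms, and one must instead evaluate Faltings' identity on degenerate configurations (with points coalescing), whereas in high genus the combinatorics of the $\binom{g}{2}$ pairs dominates and a straightforward averaging suffices. Treating both regimes uniformly with the correct constants requires choosing the auxiliary divisor carefully and tracking exactly which pairs integrate to zero, exactly as in \cite{Wilms}; once this bookkeeping is done, assembling the inequalities yields the stated bound.
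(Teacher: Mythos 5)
The first thing to note is that the paper does not prove this statement at all: Theorem \ref{ThmBdGreen} is imported verbatim as Corollary 1.5 of \cite{Wilms}, so there is no internal proof for your argument to be compared with; a citation is what the paper intends. Judged on its own merits, your sketch points at the right circle of ideas (Faltings' theta identity from \cite{FaltingsCalculus} and Wilms' refinements) but has genuine gaps that prevent it from being a proof. The displayed ``fundamental identity'' is not correct as stated: Faltings' identity for $\|\vartheta\|$ evaluated at the class of $P_1+\cdots+P_g-Q$ contains, besides $\sum_{i<j}\gamma(P_i,P_j)$ and the $\delta(Y)$ term, the term $\log\|\det(\omega_i(P_j))\|$ coming from an orthonormal basis of differentials and the Green functions $-\sum_i\gamma(P_i,Q)$ attached to the auxiliary point; dropping these is not a simplification, since controlling the Wronskian-type term under averaging is one of the main difficulties, and its integral against $\mu^{\Ar}$ is itself an invariant of the surface. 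Relatedly, the sharp prefactor $\max\{6,g+1\}/(24g)$ does not come out of a straightforward averaging of that identity: in Wilms' actual argument it comes from his closed formula expressing $\delta(Y)$ through $-24\int\log\|\vartheta\|\,d\mu$ plus the Kawazumi--Zhang invariant $\varphi(Y)$, together with positivity and upper bounds for $\varphi(Y)$ --- none of which appears in your outline, and your phrase ``once this bookkeeping is done'' is exactly the content that has to be proved.

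Two further concrete problems: for small genus the averaging scheme breaks down literally --- for $g=2$ there are no ``remaining $g-2$ variables'' to integrate, and for $g=1$ one cannot even place both $P$ and $Q$ in the degree-$g$ divisor --- and ``evaluate on degenerate configurations with points coalescing'' is not an argument, since $\gamma$ blows up on the diagonal and the identity degenerates there. Finally, the asserted bound $\sup_J\log\|\vartheta\|\le \tfrac34 g\log g+O(1)$ and the absolute constant $4$ in the statement require explicit, uniform estimates (the classical theta bounds give constants that must be tracked carefully); asserting them ``of shape $\tfrac34 g\log g+O(1)$'' is not enough for a fully explicit inequality of the kind the paper needs. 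In short: if your aim is to match the paper, cite \cite{Wilms}; if your aim is to reprove the bound, you must (i) state Faltings' identity in full, (ii) bring in Wilms' explicit formula and the invariant $\varphi(Y)$ with its bounds, and (iii) carry out the explicit constant-tracking, including the low-genus cases, before the stated inequality follows.
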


\subsection{Norms for differential forms on abelian varieties} \label{SecNormAV}

Let $A$ be a complex abelian variety of dimension $g$. On $H^0(A,\Omega^g_{A/\C})$ we put the inner product
$$
(\alpha,\beta)=\left(\frac{i}{2}\right)^g\int_A \alpha\wedge \overline{\beta}
$$
and let $\|-\|_{A,\Fal}$ be its associated norm. We have a canonical isomorphism of $1$-dimensional vector spaces
$$
\det H^0(A,\Omega^1_{A/\C}) = H^0(A,\Omega^g_{A/\C}).
$$ 
This induces a norm on $\det H^0(A,\Omega^1_{A/\C}) $ which we also denote by  $\|-\|_{A,\Fal}$. 

The norm we just constructed will be used to define the Faltings height of an abelian variety over a number field.


\section{Review of Arakelov theory}

\subsection{Arithmetic surfaces} From now on, $K$ is a number field, $R=O_K$, and $B=\Spec R$.

An \emph{arithmetic surface} is a pair $(\Xcal,\pi)$ where $\Xcal$ is an integral scheme, $\pi:\Xcal\to B$ is a morphism of finite type, flat, and proper, and the generic fibre $\Xcal_\eta$ is a smooth and geometrically irreducible curve over $K$. Since $\pi$ is flat and proper, the required properties on $\Xcal_\eta$ imply that the fibres of $\pi$ are connected.

We say that $(\Xcal,\pi)$ is \emph{regular} if $\Xcal$ is a regular scheme. It is a theorem of Lichtenbaum (Theorem 2.8 in \cite{Lichtenbaum}) that regular arithmetic surfaces are projective over $B$.

\subsection{Minimal regular models and good reduction} Let $X$ be a geometrically irreducible, smooth, projective curve over $K$ of genus $g$.

An \emph{integral model} for $X$ is a triple $(\Xcal,\pi, i)$ where $(\Xcal,\pi)$ is an arithmetic surface and $i:X\to \Xcal_\eta$ is an isomorphism.

Integral models exist: we might embed $X$ in some $\Pro^n_K$ and then take the closure of $X$ in $\Pro^n_B$. 

We say that the integral model is \emph{regular} if $\Xcal$ is regular. Furthermore, regular integral models exist, which can be seen by desingularization of any integral model; see Corollary 8.3.51 in \cite{Liu}.

Given integral models $(\Xcal,\pi, i)$ and $(\Xcal',\pi', i')$, a morphism of integral models is a morphism $f:\Xcal\to \Xcal'$ which is compatible with $\pi$, $\pi'$, $i$, and $i'$ in the sense that it is a $B$-morphism satisfying $i'=f\circ i$.

We say that a regular integral model $(\Xcal,\pi, i)$ is \emph{minimal} if for every regular integral model $(\Xcal',\pi', i')$ and every proper birational morphism of integral models $f:\Xcal\to \Xcal'$ we have that $f$ is an isomorphism.  When $g\ge 1$, there is a unique minimal regular model up to isomorphism, see Proposition 10.1.8 in \cite{Liu} (note that the terminology in \emph{loc. cit.} is slightly different).

We say that $X$ has \emph{good reduction} at a closed point $\pfrak\in B$ if for the minimal regular model $(\Xcal,\pi, i)$ of $X$ the fibre $\Xcal_\pfrak$ is a smooth curve over the residue field $\kappa(\pfrak)=R/\pfrak$. Otherwise, we say that $X$ has \emph{bad reduction} at $\pfrak$.

Let $(\Xcal',\pi',i')$ be any integral model for $X$ and $\pfrak\in B$ is a closed point. We note that if $\Xcal'_\pfrak$ is smooth, then $\pfrak$ is a prime of good reduction (of course the converse does not hold in general).

\subsection{Degree} Let $M$ be a projective module of rank $1$ over $R$ and for each $v:K\to \C$ let $\|-\|_v$ be a hermitian norm on $M\otimes_v\C$. The degree of $\overline{M}=(M,\{\|-\|_v\}_v)$ is defined as
$$
\deg_R \overline{M}=\log \# (M/Rm)-\sum_v \log \|m\|_v
$$ 
for any non-zero $m\in M$; this is well-defined by the product formula. Note that the sum is over embeddings $v:K\to \C$, not over the archimedean places (thus, a non-real place corresponds to two different embeddings), and that  we are not dividing by $[K:\Q]$.

\subsection{Intersections}  Let $(\Xcal,\pi)$ be a regular arithmetic surface, assume that $X=\Xcal_\eta$ has genus $g\ge 1$,  and for each embedding $v:K\to \C$ let $X_v$ be the compact Riemann surface associated to $X$ via the embedding $v$.

A \emph{metrized line sheaf} is a pair $\overline{\Lcal}=(\Lcal,\{\|-\|_v\}_v)$ where $\Lcal$ is a line sheaf on $\Xcal$ and for each $v:K\to \C$ we have that $\|-\|_v$ is an admissible metric on $\Lcal_v=\Lcal|_{X_v}$.

On the other hand, an \emph{Arakelov divisor} is an expression of the form
$$
D= D_{\rm fin} +\sum_{v:K\to \C} \alpha_v\cdot F_v
$$
where $D_{\rm fin}$ is a divisor on $\Xcal$, the symbol $F_v$ can be thought as the ``archimedean fibre at $v$'', and $\alpha_v$ are real numbers.

Associated to an Arakelov divisor $D$ there is the metrized line sheaf $\overline{\Ocal}(D)$: on the finite part we take the sheaf $\Ocal(D_{\rm fin})$, and for the metrics we multiply the admissible metric of $\Ocal(D_{\rm fin}|_{X_v})$ by $\exp(-\alpha_v)$.

Arakelov \cite{Arakelov, ArakelovICM} defined an intersection pairing between isometric isomorphism classes of metrized line sheaves. We denote this pairing by $(\overline{\Lcal}.\overline{\Fcal})$. It is $\R$-valued and bilinear for tensor products. It induces a bilinear pairing on Arakelov divisors by
$$
(D.E):=(\overline{\Ocal}(D).\overline{\Ocal}(E)).
$$
It also makes sense to intersect an Arakelov divisor with a metrized line sheaf.

\subsection{Special cases}\label{SecSpecial}
 
A couple of special cases of the intersection pairing deserve attention for our purposes. For a $K$-rational point  $P\in X(K)$ we let $D_P$ be the closure of $P$ in $\Xcal$. It is also an Arakelov divisor, where the archimedean part is $0$.

\begin{itemize}

\item If $P\in X(K)$ and $s:B\to \Xcal$ is the corresponding section of $\pi$, then
$$
(D_P.\overline{\Lcal})=\deg_R s^*\overline{\Lcal}
$$
where $s^*\overline{\Lcal}$ is the projective module $H^0(B,s^*\Lcal)$ endowed with the fiber metrics on each $\Lcal_v|_{v(P)}$, and $v(P)$ is the image of $P$ via the map $X(K)\to X_v$ induced by the embedding $v:K\to \C$.  

\item If $P,Q\in X(K)$ are different, then
$$
(D_P.D_Q)= (\mbox{geometric intersection contribution}) + \sum_{v:K\to \C} -\gamma_v(v(P),v(Q))
$$
where $\gamma_v$ is the canonical logarithmic Green function on $X_v$, and the first contribution is non-negative. In particular
$$
(D_P.D_Q)\ge - \sum_{v:K\to \C}  \gamma_v(v(P),v(Q)).
$$

\item If $F$ is a fibral divisor on $\Xcal$ above a finite place $\pfrak\in B$  and we intersect it with an Arakelov divisor or a metrized line sheaf, we obtain the usual (geometric) intersection number multiplied by $\log \Norm \pfrak$. 

\end{itemize}

We take this opportunity to introduce some notation: the geometric intersection on $\Xcal$ for divisors not sharing components, will be denoted by $[D.E]$ (this is a sum of intersection multiplicities). It does not agree with the Arakelov intersection pairing.

\subsection{The canonical sheaf} 

As $(\Xcal,\pi)$ is a regular arithmetic surface, we have that $\pi:\Xcal\to B$ is a local complete intersection (cf. Example 6.3.18 in \cite{Liu}) so, the fibres of $\pi$ are Gorenstein and the relative dualizing sheaf of $\pi$ exists and it is invertible, according to Grothendieck's duality theory (cf. \cite{DeligneRapoport}). We denote the relative dualizing sheaf of $\pi$ by $\omega$. 

Let $X=\Xcal_\eta$ be of genus $g\ge 1$ as before. For each $v:K\to \C$ we have the canonical metric on $\omega|_{X_v}=\Omega^1_{X_v/\C}$ from Section \ref{SecCanMetric}. The sheaf $\omega$ endowed with these canonical metrics is called the \emph{Arakelov canonical sheaf}, and it is denoted by $\widehat{\omega}$.

\subsection{The case of regular minimal arithmetic surfaces} In the context of Arakelov theory for \emph{semistable} arithmetic surfaces, Faltings \cite{FaltingsCalculus} proved arithmetic analogues of the Riemann-Roch theorem, Noether's formula, the Hodge index theorem (see also \cite{Hriljac}), and the adjunction formula, as well as the fact that $(\widehat{\omega}.\widehat{\omega})\ge 0$; note that Noether's formula in the semistable case was later proved with explicit constants by Moret-Bailly \cite{MoretBailly}.

 Lang \cite{Lang} extended part of this theory to regular arithmetic surfaces, including the Riemann-Roch theorem, the Hodge index theorem (after Hriljac \cite{Hriljac}, see Section \ref{SecHodge}), and the adjunction formula, see also \cite{Chinburg}. The adjunction formula for sections is:

\begin{theorem}\label{ThmAdjunction} Let $(\Xcal,\pi)$ be a minimal regular arithmetic surface whose generic fibre $X=\Xcal_\eta$ has genus $g\ge 1$. For all $K$-rational points $P\in X(K)$ we have 
$$
(D_P.D_P) + (D_P.\widehat{\omega})= 0.
$$
\end{theorem}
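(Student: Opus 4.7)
The plan is to combine the classical algebraic adjunction formula on the regular arithmetic surface $\Xcal$ with the fact that the canonical metric on $\omega|_{X_v}$ and the admissible metric on $\Ocal(P)|_{X_v}$ were defined in Section \ref{SecCanMetric} precisely so that the residue trivialisation is an isometry at every archimedean place. In this way, the geometric and the archimedean contributions to $(D_P.\widehat{\omega})+(D_P.D_P)$ will vanish simultaneously.

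First I would work with the section $s:B\to \Xcal$ corresponding to $P$. Since $\Xcal$ is regular and $\pi$ is proper, $s$ is a closed immersion whose image is $D_P$, and $\pi\circ s=\mathrm{id}_B$ identifies $D_P$ with $B$; in particular $D_P$ is a regular effective Cartier divisor on $\Xcal$. The classical adjunction formula for such a divisor on the local complete intersection $\pi:\Xcal\to B$ then gives an isomorphism of line sheaves on $D_P$,
$$
\omega|_{D_P}\otimes \Ocal(D_P)|_{D_P}\;\cong\;\omega_{D_P/B}\;=\;\Ocal_B,
$$
the final equality using $D_P\cong B$. At every embedding $v:K\to \C$, the induced isomorphism on the fibre at $v(P)$ is the residue map $dt\otimes t^{-1}\mapsto 1$, which by the very construction in Section \ref{SecCanMetric} is an isometry once $\Ocal(P)$ carries its admissible normalised metric. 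Hence $s^*\bigl(\widehat{\omega}\otimes \overline{\Ocal}(D_P)\bigr)$ is isometric to the trivial hermitian line sheaf on $B$ and has Arakelov degree $0$.

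I would then translate this vanishing into the desired identity using the formula $(D_P.\overline{\Lcal})=\deg_R s^*\overline{\Lcal}$ recalled in Section \ref{SecSpecial}, together with the bilinearity of the Arakelov intersection pairing under tensor products of metrized line sheaves:
$$
(D_P.\widehat{\omega})+(D_P.D_P)\;=\;\deg_R s^*\widehat{\omega}+\deg_R s^*\overline{\Ocal}(D_P)\;=\;\deg_R s^*\bigl(\widehat{\omega}\otimes \overline{\Ocal}(D_P)\bigr)\;=\;0.
$$

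The main obstacle is conceptual rather than computational: one must recognise that the algebraic adjunction becomes an isometry once both sides carry their canonical metrics. However this is exactly the content of the construction in Section \ref{SecCanMetric}, so no further analytic input is required and in particular Wilms' bounds or Green function estimates are not needed here. It is also worth noting that minimality of $\Xcal$ plays no role in the argument; it serves only to pin down a canonical model on which $\widehat{\omega}$ is defined.
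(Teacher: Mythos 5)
Your proof is correct: the paper does not actually prove Theorem \ref{ThmAdjunction} but cites Lang's extension of Faltings' theory to regular arithmetic surfaces, and your argument --- the algebraic adjunction isomorphism along the section $s$, combined with the observation that the canonical metric on $\omega$ was constructed in Section \ref{SecCanMetric} precisely so that the residue trivialisation is an isometry for the admissible metric on $\Ocal(P)$, hence $\deg_R s^*\bigl(\widehat{\omega}\otimes\overline{\Ocal}(D_P)\bigr)=0$ --- is exactly the standard proof found in those references. Your closing remark is also accurate: minimality of $\Xcal$ plays no role, as the adjunction formula holds for any regular arithmetic surface.
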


 A version of Noether's formula for regular arithmetic surfaces was proved by Saito \cite{Saito}, and $(\widehat{\omega}.\widehat{\omega})\ge 0$ was obtained by Sun \cite{Sun}.  
\begin{theorem}\label{ThmSelf} Let $(\Xcal,\pi)$ be a minimal regular arithmetic surface whose generic fibre $X=\Xcal_\eta$ has genus $g\ge 2$. We have $(\widehat{\omega}.\widehat{\omega})\ge 0$.
\end{theorem}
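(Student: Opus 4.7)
The plan is to reduce to the semistable case via a finite base change and then to compare canonical sheaves. The input we use freely is Faltings' original arithmetic Hodge index/non-negativity result $(\widehat{\omega}.\widehat{\omega})\ge 0$ for minimal regular \emph{semistable} arithmetic surfaces (in \cite{FaltingsCalculus}), together with Mumford's theorem on negative definiteness of intersection matrices of contracted components.

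First, by Deligne--Mumford there is a finite extension $L/K$ such that $X_L:=X\times_K L$ acquires semistable reduction over $B'=\Spec O_L$. Let $\Xcal'\to B'$ be the minimal regular model of $X_L$; by minimality it is itself semistable, so Faltings' theorem gives $(\widehat{\omega}_{\Xcal'}.\widehat{\omega}_{\Xcal'})_{\Xcal'}\ge 0$. Consider the base change $g\colon \Xcal\times_B B'\to \Xcal$. By minimality and the hypothesis $g\ge 2$, the identity on generic fibres extends to a birational $B'$-morphism $\pi\colon \Xcal'\to \Xcal\times_B B'$; writing $f=g\circ \pi$ we obtain the comparison setup $\Xcal'\xrightarrow{f}\Xcal$ of regular arithmetic surfaces of genus $g\ge 2$.

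Next I would compare the two Arakelov canonical sheaves. Since the relative dualizing sheaf commutes with flat base change, one has an identification $g^{*}\omega_{\Xcal/B}=\omega_{(\Xcal\times_B B')/B'}$, and the canonical archimedean metrics depend only on the generic fibre, so they transport correctly to each embedding $L\to\C$. On the birational side, $\pi$ is a composition of blow-downs of $(-1)$-curves over a regular target, or more generally a contraction onto a Gorenstein surface; in either case standard surface theory yields
\[
\omega_{\Xcal'/B'} \;=\; \pi^{*}\omega_{(\Xcal\times_B B')/B'}\otimes \Ocal_{\Xcal'}(E),
\]
where $E$ is a $\pi$-exceptional $\Q$-fibral divisor. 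Combining, $\widehat{\omega}_{\Xcal'}=f^{*}\widehat{\omega}_{\Xcal}\otimes \Ocal(E)$ in the metrized sense, because $E$ is supported on closed fibres and contributes nothing at the archimedean places.

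Expanding the self-intersection on $\Xcal'$ and using bilinearity gives
\[
(\widehat{\omega}_{\Xcal'}.\widehat{\omega}_{\Xcal'})_{\Xcal'} \;=\; (f^{*}\widehat{\omega}_{\Xcal}.f^{*}\widehat{\omega}_{\Xcal})_{\Xcal'} \;+\; 2(f^{*}\widehat{\omega}_{\Xcal}.E)_{\Xcal'}\;+\;(E.E)_{\Xcal'}.
\]
The first term equals $[L:K]\,(\widehat{\omega}_{\Xcal}.\widehat{\omega}_{\Xcal})_{\Xcal}$ by the finite base-change formula for Arakelov intersection (degree $[L:K]$ with the usual normalization of $\deg_R$). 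The cross term vanishes by the projection formula because $E$ is $\pi$-exceptional and $f^{*}\widehat{\omega}_{\Xcal}$ is pulled back from $\Xcal$: schematically $(f^{*}\widehat{\omega}_{\Xcal}.E)=(\widehat{\omega}_{\Xcal}.f_{*}E)$, and $f_{*}E=0$ since $E$ is supported on curves contracted to points. Finally, $(E.E)\le 0$ by Mumford's theorem on the negative definiteness of the intersection matrix of a set of curves contracted by a birational morphism of regular surfaces, applied fibre by fibre.

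Putting these three facts together yields
\[
[L:K]\,(\widehat{\omega}_{\Xcal}.\widehat{\omega}_{\Xcal})_{\Xcal} \;=\; (\widehat{\omega}_{\Xcal'}.\widehat{\omega}_{\Xcal'})_{\Xcal'} \;-\;(E.E) \;\ge\; 0,
\]
and dividing by $[L:K]>0$ gives the conclusion. The main obstacle is the third step: rigorously establishing the comparison $\omega_{\Xcal'/B'}=\pi^{*}\omega_{(\Xcal\times_B B')/B'}\otimes \Ocal(E)$ when $\Xcal\times_B B'$ is only Gorenstein (or after passing to a suitable normalization/partial desingularization), and verifying that the resulting $E$ is actually $\pi$-exceptional so that Mumford's negative definiteness applies to $(E.E)$. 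Once that is in place, the cross-term vanishing and base-change behaviour of Arakelov intersection are routine.
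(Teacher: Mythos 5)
First, note that the paper does not prove this statement at all: Theorem \ref{ThmSelf} is quoted from Sun \cite{Sun} (with Saito \cite{Saito} cited for the related Noether formula), so your proposal is being measured against the actual content of that reference rather than an argument in the text.

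Your reduction to the semistable case has a genuine gap at its pivot. You claim that minimality of $\Xcal'$ produces a birational $B'$-morphism $\pi\colon \Xcal'\to \Xcal\times_B B'$ extending the identity on generic fibres, and the whole sign bookkeeping ($\widehat{\omega}_{\Xcal'}=\pi^{*}(\cdot)\otimes\Ocal(E)$ with $E$ exceptional, cross term zero, $(E.E)\le 0$) rests on having the morphism in that direction. But minimality gives morphisms \emph{into} the minimal regular model from other regular models, not out of it, and such a $\pi$ does not exist in general: if $X$ has bad (non-semistable) reduction at $\pfrak$ but acquires good or irreducible semistable reduction over $L$, the special fibre of $\Xcal'$ above $\pfrak$ is irreducible while that of $\Xcal\times_B B'$ is not, so no proper birational morphism $\Xcal'\to\Xcal\times_B B'$ can exist (a birational morphism can only contract components of the source). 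What does exist is a common regular model $\Zcal$ with morphisms $\rho\colon\Zcal\to\Xcal\times_B B'$ and $\sigma\colon\Zcal\to\Xcal'$; but then the two discrepancy divisors enter with unhelpful signs: projection formula and negative semidefiniteness give $\widehat{\omega}_{\Zcal}^{2}\le [L:K]\,\widehat{\omega}_{\Xcal}^{2}$ \emph{and} $\widehat{\omega}_{\Zcal}^{2}\le \widehat{\omega}_{\Xcal'}^{2}$, which do not combine formally with Faltings' inequality $\widehat{\omega}_{\Xcal'}^{2}\ge 0$ from \cite{FaltingsCalculus} to yield $\widehat{\omega}_{\Xcal}^{2}\ge 0$. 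To close the argument one must prove an actual comparison of relative dualizing sheaves under base change (roughly, that $\rho^{*}\omega_{(\Xcal\times_B B')/B'}$ dominates $\sigma^{*}\omega_{\Xcal'/B'}$ up to an effective, controllable fibral divisor), handling the possibly non-normal or non-Gorenstein nature of $\Xcal\times_B B'$ and wild ramification; this is precisely the substantive content of Sun's paper, not a routine step as your last paragraph suggests. So the strategy is in the right spirit, but as written the key morphism is unavailable and the inequality does not follow from the pieces you assemble.
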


\subsection{The Faltings height of an abelian variety}

Let $A$ be an abelian variety of dimension $g\ge 1$ over $K$ and let $\Acal$ be its N\'eron model over $R$. Let $\epsilon:B\to \Acal$ be the identity section. Define the cotangent space at the identity as
$$
\Cot(\Acal)=H^0(B,\epsilon^*\Omega^1_{\Acal/B}).
$$
Then $\Cot(\Acal)$ is a projective $R$-module of rank $g$ and it follows that 
$$
M=\det \Cot(\Acal)
$$
is a projective $R$-module of rank $1$. For each $v:K\to \C$ we have that
$$
M\otimes_v \C = \det H^0(A_v , \Omega^1_{A_v/\C}) =  H^0(A_v , \Omega^g_{A_v/\C})
$$
where $A_v=A\otimes_v \C$. Thus, each $M\otimes_v \C$ has the Hermitian norm $\|-\|_{A_v, \Fal}$ from Section \ref{SecNormAV} and we obtain the metrized projective $R$-module of rank $1$ 
$$
\overline{\det \Cot(\Acal)} = (\det \Cot(\Acal), \{\|-\|_{A_v, \Fal}\}_v).
$$
The \emph{Faltings height} of $A$ relative to $K$ is
$$
h_K(A)=\deg_R \overline{\det \Cot(\Acal)} .
$$
(Note that the subindex $K$ is intended to remind the reader that we are not normalizing to $\Q$.) This is a measure of complexity of $A$ introduced by Faltings in \cite{FaltingsMordell}. There he proved the following comparison under isogeny:

\begin{theorem} Let $\phi:A_1\to A_2$ be an isogeny of abelian varieties over $K$ and assume that they are principally polarized. Then
$$
h_K(A_2) -\frac{[K:\Q]}{2}\log \deg \phi\le  h_K(A_1)\le h_K(A_2)  + \frac{[K:\Q]}{2}\log \deg \phi.
$$
\end{theorem}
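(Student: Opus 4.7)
The plan is to set up a pullback comparison on cotangent modules induced by $\phi$, and then run the same computation for a companion isogeny $A_2\to A_1$ built from the principal polarizations; the two one-sided estimates will combine to give the symmetric bound.

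First I would extend $\phi$ to a morphism of N\'eron models $\Acal_1\to\Acal_2$ via the N\'eron mapping property, and consider the induced $R$-linear map on the rank-one projective modules $\psi=\det\phi^*:M_2\to M_1$, where $M_i=\det\Cot(\Acal_i)$. Because $\phi$ is \'etale in characteristic zero, $\psi$ is generically an isomorphism, hence injective with torsion cokernel. At each complex place $v:K\to\C$, the identity
\[
\int_{A_{1,v}}\phi_v^*\alpha\wedge\overline{\phi_v^*\alpha}=d\int_{A_{2,v}}\alpha\wedge\overline{\alpha}, \qquad d=\deg\phi,
\]
yields the norm transformation $\|\psi(m)\|_{1,v}=d^{1/2}\|m\|_{2,v}$ for any nonzero $m\in M_2$. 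Feeding these into the definition of $\deg_R$ and using the multiplicativity of indices for the inclusion $\psi(M_2)\subset M_1$, I obtain the fundamental identity
\[
h_K(A_1)-h_K(A_2)=\log\#(M_1/\psi(M_2))-\frac{[K:\Q]}{2}\log d.
\]
Since the first term on the right is non-negative, this already gives the lower bound on $h_K(A_1)$ asserted in the theorem.

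For the matching upper bound, I would use the principal polarizations $\lambda_i:A_i\xrightarrow{\sim}\hat{A}_i$ and the dual isogeny $\phi^\vee:\hat{A}_2\to\hat{A}_1$ to define the companion isogeny $\tilde\phi:=\lambda_1^{-1}\circ\phi^\vee\circ\lambda_2:A_2\to A_1$, which again has degree $d$ because $\deg\phi^\vee=\deg\phi$ and the polarizations are isomorphisms. Running the previous argument with $\tilde\phi$ in place of $\phi$ gives
\[
h_K(A_2)-h_K(A_1)=\log\#(M_2/\tilde\psi(M_1))-\frac{[K:\Q]}{2}\log d,
\]
and summing the two identities yields $\log\#(M_1/\psi(M_2))+\log\#(M_2/\tilde\psi(M_1))=[K:\Q]\log d$. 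Since both cokernel lengths are non-negative, each lies in $[0,[K:\Q]\log d]$, which forces $|h_K(A_1)-h_K(A_2)|\le\frac{[K:\Q]}{2}\log d$, as required.

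The main obstacle, as I see it, is not any single hard step but the careful bookkeeping at the archimedean places: one must identify the induced norm on $\det\Cot(A_v)$ with the Faltings norm on $H^0(A_v,\Omega^g_{A_v/\C})$ from Section \ref{SecNormAV} isometrically, so that the transformation rule $\|\psi(m)\|_{1,v}=d^{1/2}\|m\|_{2,v}$ has precisely the right exponent; getting any spurious factor of $g$ here would ruin the final bound. Beyond that, the role of principal polarization is exactly to produce an isogeny $A_2\to A_1$ of the same degree $d$, which is what allows the symmetrization that closes the argument.
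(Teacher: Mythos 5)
Your argument is correct and follows essentially the same route as the paper: the paper handles the first inequality by citing Faltings' Lemma 5 (whose proof is exactly your N\'eron-model/cotangent cokernel computation with the $d^{1/2}$ archimedean scaling), and obtains the other inequality by symmetrizing with the dual isogeny via the principal polarizations, just as you do with $\tilde\phi=\lambda_1^{-1}\circ\phi^\vee\circ\lambda_2$. Your write-up simply makes the cited lemma self-contained; the exponents and the normalization (sum over embeddings, no division by $[K:\Q]$) all check out.
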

\begin{proof} The lower bound is Lemma 5 in \cite{FaltingsMordell} while the upper bound is the same using the dual isogeny; we have $A_j^\vee\simeq A_j$ for the dual abelian varieties due to the principal polarizations.
\end{proof}

Our main interest on the Faltings height is that it can be used to bound the $\delta$-invariant.

\begin{theorem}\label{ThmDeltaH} Let $X$ be a geometrically irreducible, smooth, projective curve over $K$ of genus $g\ge 1$. Let $J$ be the jacobian of $X$. Then
$$
\sum_{v:K\to \C} \delta(X_v) \le 12 h_K(J)+ 4g[K:\Q]\log(2\pi).
$$
\end{theorem}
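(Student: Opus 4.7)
The plan is to deduce the stated bound from the arithmetic Noether formula applied to the minimal regular model of $X$, together with the non-negativity of the self-intersection of the Arakelov canonical sheaf. The strategy is standard in arithmetic Arakelov theory: Noether's formula expresses $12\, h_K(J)$ as the sum of $(\widehat{\omega}.\widehat{\omega})$, non-negative finite-place contributions, the sum of Faltings' $\delta$-invariants at the archimedean places, and an explicit constant of the form $-4g[K:\Q]\log(2\pi)$; dropping the two non-negative terms gives the desired upper bound for $\sum_v \delta(X_v)$.

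First I would fix the minimal regular model $(\Xcal,\pi)$ of $X$ over $B=\Spec R$ and apply Saito's extension \cite{Saito} of the arithmetic Noether formula to regular arithmetic surfaces. In the semistable case this is due to Faltings, with the explicit archimedean constant worked out by Moret-Bailly \cite{MoretBailly}; Saito's extension covers the non-semistable case via Artin-conductor type local corrections. The resulting identity takes the shape
$$
12\, h_K(J) \;=\; (\widehat{\omega}.\widehat{\omega}) \;+\; \sum_{\pfrak} \delta_\pfrak \log\Norm\pfrak \;+\; \sum_{v:K\to\C} \delta(X_v) \;-\; 4g[K:\Q]\log(2\pi),
$$
where each local invariant $\delta_\pfrak \ge 0$ is non-negative (and vanishes at primes of good reduction).

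Next I would solve for the archimedean sum and discard the non-negative terms: the identity rewrites as
$$
\sum_{v:K\to\C} \delta(X_v) \;=\; 12\, h_K(J) \;+\; 4g[K:\Q]\log(2\pi) \;-\; (\widehat{\omega}.\widehat{\omega}) \;-\; \sum_{\pfrak} \delta_\pfrak \log \Norm\pfrak.
$$
By Theorem \ref{ThmSelf} (Sun's positivity) we have $(\widehat{\omega}.\widehat{\omega})\ge 0$ for $g\ge 2$; the case $g=1$ is classical, since $X=J$ is an elliptic curve and the $\delta$-invariant is given explicitly by Faltings' formula. Together with $\delta_\pfrak\ge 0$, dropping these non-negative contributions yields the claimed inequality.

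The main obstacle in writing the full proof is locating and normalizing the precise form of Saito's Noether formula with the correct archimedean constant $-4g[K:\Q]\log(2\pi)$, since different references use different conventions for the Faltings height (stable vs.\ relative to the Néron model) and for the Arakelov metric. However, because the conclusion is only an inequality, one never needs to inspect the non-semistable finite contributions beyond their non-negativity; the full Noether identity is invoked solely to produce the correct archimedean constant. Once the statement of Noether is pinned down in the conventions of the paper, the remainder of the argument is immediate.
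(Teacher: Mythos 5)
Your overall strategy---an arithmetic Noether formula, non-negativity of $(\widehat{\omega}.\widehat{\omega})$ and of the finite local terms, then discarding those terms---is exactly the paper's. The genuine difference is that you apply a Noether formula directly to the (possibly non-semistable) minimal regular model, attributing to Saito an identity whose left-hand side is $12\,h_K(J)$ and whose archimedean constant is $-4g[K:\Q]\log(2\pi)$. This is where the gap lies, and it is not only the normalization issue you flag. What sits on the left of any Noether-type formula is $12\deg_R(\det\pi_*\widehat{\omega})$, the metrized determinant of cohomology of the model, not the Faltings height of the Jacobian; Saito's theorem itself is a statement about the finite part (conductor equals discriminant) and does not come packaged with either the explicit archimedean constant or $h_K(J)$. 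The identification $\deg_R(\det\pi_*\widehat{\omega})=h_K(J)$, which the paper cites from Bost--Mestre--Moret-Bailly, Lang, and Edixhoven--de Jong, rests on comparing $\pi_*\omega_{\Xcal/B}$ with the cotangent lattice of the N\'eron model and is standard in the semistable setting; for a general regular model it requires a separate argument, and the non-negativity and normalization of the local (Artin-conductor type) terms would also have to be pinned down. As written, your identity ``$12\,h_K(J)=\cdots$'' is unjustified when the reduction is not semistable. (A minor further point: for $g=1$ a curve need not equal its Jacobian, though $X_v\cong J_v$ as Riemann surfaces, so that case is harmless.)

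The paper avoids all of this with a one-line reduction that your proposal is missing: pass to a finite extension $L/K$ over which $X$ has semistable reduction. Then $h_L(J_L)\le [L:K]\,h_K(J)$, while $\sum_{w:L\to\C}\delta((X_L)_w)=[L:K]\sum_{v:K\to\C}\delta(X_v)$, so the inequality over $L$ implies the one over $K$. Over $L$ one quotes Faltings' semistable Noether formula with Moret-Bailly's explicit constant, the non-negativity $(\widehat{\omega}.\widehat{\omega})\ge 0$, the non-negativity of the $\delta_\pfrak(\Xcal)$, and the semistable identification $\deg_R(\det\pi_*\widehat{\omega})=h_L(J_L)$. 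If you insert this reduction, your argument becomes the paper's proof and neither Saito's theorem nor Sun's positivity is needed; without it (or without an actual proof of a general-regular-model Noether formula featuring $h_K(J)$ with the stated constant), the proposal has a real gap.
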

\begin{proof} If we base change to a finite extension $L/K$ we have $h_L(J_L)\le [L:K] h_K(J)$, while the right hand side of the bound we want to prove gets multiplied by $[L:K]$. Therefore, we may assume that $A$ has semistable reduction over $K$.

In \cite{FaltingsCalculus} Faltings proved an arithmetic version of Noether's formula with some unspecified numerical constants. This was made explicit by Moret-Bailly \cite{MoretBailly}:
$$
12\deg_R (\det \pi_*\widehat{\omega} ) = (\widehat{\omega}.\widehat{\omega}) + \sum_{\pfrak\in B} \delta_\pfrak(\Xcal)\log \Norm \pfrak + \sum_{v:K\to \C} \delta(X_v) - 4g[K:\Q]\log (2\pi)
$$
where $\delta_\pfrak(\Xcal)$ is a non-negative contribution measuring singularities of the special fibre $\Xcal_\pfrak$ (the metrics used by Moret-Bailly agree with ours and with those of Faltings \cite{FaltingsMordell, FaltingsCalculus}). Also, we recall that $(\widehat{\omega}.\widehat{\omega})\ge 0$. 

The equality
$$
\deg_R (\det \pi_*\widehat{\omega} ) = h_K(J)
$$
can be found in p.71 of  \cite{BostMestreMoretBailly}, see also p.159 in \cite{Lang} or Section 4.4 in \cite{EdixhovenDeJong}. This proves the desired estimate.
\end{proof}


\section{Fibres}

\subsection{Fibral corrections}\label{SecFibral1}  Let $(\Xcal,\pi)$ be a minimal regular arithmetic surface whose generic fibre $X=\Xcal_\eta$ has genus $g\ge 2$.

Let $W^0$ be the group of Arakelov divisors whose restriction to the generic fibre $X$ has degree $0$.  For $D\in W^0$ we would like to consider a ``correction term'' $\Phi(D)$ such that $D-\Phi(D)$ has intersection $0$ with every fibre component at every fibre of $\pi$. This is indeed possible as long as we allow divisors with $\Q$-coefficients (which of course is compatible with the Arakelov intersection pairing, by linearity).

For a prime $\pfrak \in B$ we let $V_\pfrak^0$ be the $\Q$-vector space generated by the reduced irreducible components of the fibre $\Xcal_\pfrak$, having coefficients that add up to $0$. See \cite{Lang} for the following result.

\begin{theorem} There is a linear map $\Phi_\pfrak: W^0\to V^0_\pfrak$ such that for all $D\in W^0$ we have that $D-\Phi_\pfrak(D)$ is orthogonal to $V^0_\pfrak$ for the Arakelov intersection pairing. In particular, if we define $\Phi(D)=\sum_\pfrak \Phi_\pfrak(D)$ then $D-\Phi(D)$ is orthogonal to all vertical Arakelov divisors.

Furthermore, if $\Xcal_\pfrak$ is irreducible, then $\Phi_\pfrak$ is the zero map.
\end{theorem}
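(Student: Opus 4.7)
The plan is to realize $\Phi_\pfrak(D)$ as a Riesz-type representative for the functional $E\mapsto (D.E)$ on $V_\pfrak^0$, the non-degeneracy of the Arakelov pairing on $V_\pfrak^0$ being supplied by Zariski's lemma.

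Fix $\pfrak$ and write $\Xcal_\pfrak=\sum_{i=1}^m n_i C_i$, where the $C_i$ are the distinct reduced irreducible components and $n_i\ge 1$ are the multiplicities. Let $V_\pfrak$ denote the $\Q$-span of $C_1,\dots,C_m$, so $V_\pfrak^0$ is the codimension-one hyperplane of formal sums whose coefficients add to zero. Two ingredients drive the construction. First, $\Xcal_\pfrak$ is a principal divisor on $\Xcal$ (cut out locally at $\pfrak$ by a uniformizer of $R_\pfrak$), so $(\Xcal_\pfrak.F)=0$ for every divisor $F$. Second, Zariski's lemma for regular arithmetic surfaces (see e.g.\ \cite{Lang}), applied using connectedness of $\Xcal_\pfrak$, says that the geometric intersection matrix $\big((C_i.C_j)\big)$ is negative semi-definite with kernel exactly $\Q\cdot\Xcal_\pfrak$; multiplying by $\log\Norm\pfrak>0$ carries this statement over to the Arakelov pairing on $V_\pfrak$.

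Since $n_i>0$ for all $i$, the vector $\Xcal_\pfrak$ does not lie in $V_\pfrak^0$, so the composition $V_\pfrak^0\hookrightarrow V_\pfrak\twoheadrightarrow V_\pfrak/\Q\Xcal_\pfrak$ is an isomorphism and the Arakelov pairing on $V_\pfrak^0$ is therefore negative definite. I would then define $\Phi_\pfrak(D)\in V_\pfrak^0$ as the unique element representing the linear functional $E\mapsto (D.E)|_{V_\pfrak^0}$; linearity in $D$ and the relation $D-\Phi_\pfrak(D)\perp V_\pfrak^0$ are immediate. To upgrade to orthogonality of $D-\Phi(D)$ against every vertical Arakelov divisor, I would decompose $V_\pfrak=V_\pfrak^0\oplus\Q\Xcal_\pfrak$ and use that divisors supported on distinct fibres do not intersect; the remaining orthogonality against each full fibre $\Xcal_\pfrak$ follows from principality (which kills $(\Phi_\qfrak(D).\Xcal_\pfrak)$ for every $\qfrak$) together with $(D.\Xcal_\pfrak)=(\deg D_\eta)\log\Norm\pfrak=0$, which is exactly where the assumption $D\in W^0$ enters. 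Finiteness of the sum $\Phi(D)=\sum_\pfrak\Phi_\pfrak(D)$ and the final assertion of the theorem collapse to a single observation: whenever $\Xcal_\pfrak$ is irreducible, $V_\pfrak^0=0$ tautologically, so $\Phi_\pfrak=0$; in particular this holds at every prime of good reduction.

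The sole nontrivial input is Zariski's lemma for regular arithmetic surfaces, which I expect to be the only step carrying any real content; everything else is bookkeeping with the splitting $V_\pfrak=V_\pfrak^0\oplus\Q\Xcal_\pfrak$ and the vanishing of intersections with principal or cross-fibral divisors.
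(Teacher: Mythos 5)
Your proposal is correct and follows essentially the same route as the paper: the paper quotes Lang for the statement, but its Lemmas \ref{LemmaW0Fp}, \ref{LemmaSemiDef}, \ref{LemmaUH} and Corollary \ref{CoroPhiFind} contain exactly your ingredients --- Zariski's lemma giving negative semidefiniteness with radical $\Q\cdot F_\pfrak$, hence nondegeneracy (negative definiteness) on the coefficient-sum-zero hyperplane, plus $(D.F_\pfrak)=0$ for $D\in W^0$ --- with your Riesz-representative phrasing playing the role of the paper's explicit linear system and its bijection $U\to H$. One small caution: the parenthetical claim that local principality of the fibre yields $(\Xcal_\pfrak.F)=0$ for \emph{every} divisor $F$ is an overstatement (a section $D_P$ has $(F_\pfrak.D_P)=\log\Norm\pfrak$); the vanishing holds for vertical $F$, which is the only place you use it, and for the horizontal contribution you correctly fall back on $(D.F_\pfrak)=\deg(D|_X)\log\Norm\pfrak=0$, which is where $D\in W^0$ enters, so nothing in the argument breaks.
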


\subsection{Computation of the fibral corrections}\label{SecFibral2} In practice, for $D\in W^0$ the divisor $\Phi_\pfrak(D)$ is easy to find if the special fibre $\Xcal_\pfrak$ is known; this immediately follows from the standard proof of the previous theorem, see for instance \cite{Lang}. Let us explain the practical procedure. For this we fix $\pfrak$.

Let $C_1,...,C_s$ be the reduced irreducible components of $\Xcal_\pfrak$ and let $m_1,...,m_s$ be their multiplicities. We write $F_\pfrak = \sum_jm_jC_j$; note that this is a divisor, not the special fibre $\Xcal_\pfrak$ (which is a scheme).

The next observation is due to the fact that for  the Arakelov intersection pairing, all fibres are numerically equivalent.

\begin{lemma}\label{LemmaW0Fp} If $D\in W^0$ then the intersection of $D$ with every $F_\pfrak$ is $0$.
\end{lemma}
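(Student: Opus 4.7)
The plan is to decompose an arbitrary $D\in W^0$ into horizontal, vertical-finite, and archimedean parts and verify that each piece contributes $0$ to $(D.F_\pfrak)$, using the identity $F_\pfrak=\pi^*(\pfrak)$ together with the projection formula for the Arakelov intersection pairing.

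First, I would write $D=\sum_P n_P D_P+\sum_\qfrak V_\qfrak+\sum_{v:K\to\C}\alpha_v F_v$, where each $D_P$ is the closure of a closed point $P$ of $X$, each $V_\qfrak$ is supported on the special fibre $\Xcal_\qfrak$, and $F_v$ is the archimedean fibre at $v$. The hypothesis $D\in W^0$ then reads $\sum_P n_P\deg(P)=0$. Next I would treat the three pieces in turn. For a horizontal $D_P$, the morphism $D_P\to B$ is finite of degree $\deg(P)$ (since $D_P$ dominates $B$ and $B$ is one-dimensional regular), so the scheme-theoretic fibre of $D_P$ over $\pfrak$ has $\kappa(\pfrak)$-length $\deg(P)$; this yields $(D_P.F_\pfrak)=\deg(P)\log\Norm\pfrak$, and summing with coefficients $n_P$ gives $(\sum_P n_P\deg(P))\log\Norm\pfrak=0$. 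For a vertical $V_\qfrak$ (at any finite $\qfrak$, including $\qfrak=\pfrak$), the projection formula together with $F_\pfrak=\pi^*(\pfrak)$ gives
$$
(V_\qfrak.F_\pfrak)=(V_\qfrak.\pi^*\pfrak)=(\pi_*V_\qfrak.\pfrak)_B=0,
$$
because $V_\qfrak$ is contracted by $\pi$ and hence $\pi_*V_\qfrak=0$ on $B$. For an archimedean $F_v$, the Arakelov pairing assigns $(F_v.F_\pfrak)=0$ by construction, since archimedean and finite fibres are orthogonal for the pairing. Adding the three vanishing contributions gives $(D.F_\pfrak)=0$.

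The main obstacle is purely notational rather than conceptual: one must keep track of the $K$-normalization of the Arakelov pairing so that the factor $\log\Norm\pfrak$ appears uniformly in the horizontal computation. Conceptually, the lemma simply records that $F_\pfrak$ is pulled back from the base $B$, so its pairing with any Arakelov divisor $D$ depends only on $\deg(D|_X)$, which vanishes on $W^0$; this is exactly the numerical equivalence of fibres alluded to in the motivating remark.
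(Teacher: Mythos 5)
Your proof is correct, and it is essentially the argument the paper is alluding to when it says the lemma follows from the numerical equivalence of fibres: since $F_\pfrak=\pi^*(\pfrak)$, the pairing of $F_\pfrak$ with any Arakelov divisor only sees the degree of its restriction to the generic fibre, which vanishes for $D\in W^0$. The paper states this without a written proof, and your case-by-case verification (horizontal via flatness of $D_P\to B$, vertical via the projection formula, archimedean trivially) is the standard way to make that remark precise.
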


The next lemma is a standard fact about fibred surfaces, see for instance the proof in \cite{FaltingsCalculus}.

\begin{lemma}\label{LemmaSemiDef} The geometric intersection pairing $[-.-]$ is negative semidefinite on the group of divisors generated by the $C_j$, and the only divisors in this group with $0$ self-intersection are multiples of $F_\pfrak=\sum_j m_jC_j$. (The same holds for the Arakelov intersection pairing, after multiplying by $\log\Norm\pfrak$.) 
\end{lemma}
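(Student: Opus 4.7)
The plan is to follow the classical Zariski-style argument for fibral divisors. Let $D=\sum_j a_j C_j$ with $a_j\in\Q$, and set $b_j=a_j/m_j$ so that $D=\sum_j m_j b_j C_j$. The key input is Lemma \ref{LemmaW0Fp} applied to the Arakelov divisor $C_i$ (which has degree $0$ on the generic fibre, since it is vertical), giving $[C_i . F_\pfrak]=0$ for every $i$; equivalently,
$$
m_i[C_i.C_i] = -\sum_{j\neq i} m_j [C_i.C_j].
$$

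The next step is algebraic manipulation. First I would expand
$$
[D.D]=\sum_i m_i^2 b_i^2[C_i.C_i] + \sum_{i\neq j} m_i m_j b_i b_j [C_i.C_j],
$$
then use the identity $2 b_i b_j = b_i^2 + b_j^2 - (b_i-b_j)^2$ on the off-diagonal terms. Collecting the $b_i^2$ contributions and invoking the relation above, the diagonal and off-diagonal square terms cancel exactly, leaving
$$
[D.D] = -\sum_{i<j} m_i m_j (b_i-b_j)^2\, [C_i.C_j].
$$
Since $C_i$ and $C_j$ are distinct reduced irreducible curves on the regular surface $\Xcal$, each $[C_i.C_j]\ge 0$, and the $m_i m_j$ are positive. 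Therefore $[D.D]\le 0$, which gives the negative semidefiniteness.

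For the equality case, suppose $[D.D]=0$. Then $(b_i-b_j)^2[C_i.C_j]=0$ for every pair $i<j$. Define an equivalence relation on the index set by $i\sim j$ iff $b_i=b_j$; then $[C_i.C_j]=0$ whenever $i\not\sim j$. Here the decisive geometric input is that the special fibre $\Xcal_\pfrak$ is connected (this is where I would cite that $\pi$ is flat and proper with geometrically connected generic fibre, so Zariski's connectedness theorem applies, forcing the incidence graph of the $C_j$ to be connected). Connectedness of the incidence graph then forces a single equivalence class, so all $b_j$ are equal to a common value $c\in\Q$, and $D=c\sum_j m_j C_j = c\, F_\pfrak$.

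The main potential obstacle is mostly bookkeeping: making sure the passage to $\Q$-coefficients is clean and that the application of $[C_i.F_\pfrak]=0$ truly follows from Lemma \ref{LemmaW0Fp} (the Arakelov divisor $C_i$, viewed on the regular surface, is vertical hence has generic-fibre degree zero, so it lies in $W^0$). The final assertion about the Arakelov pairing is immediate: intersection of a fibral divisor with another divisor differs from the geometric intersection only by the factor $\log\Norm\pfrak$, as recorded in Section \ref{SecSpecial}, so negative semidefiniteness and the characterization of the kernel transfer verbatim.
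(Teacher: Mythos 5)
Your proof is correct and is exactly the classical Zariski-lemma argument (rewrite $D=\sum_j m_jb_jC_j$, use $[C_i.F_\pfrak]=0$ to get $[D.D]=-\sum_{i<j}m_im_j(b_i-b_j)^2[C_i.C_j]$, then invoke connectedness of the fibre), which is precisely the standard proof the paper refers to via its citation of Faltings rather than reproving. No gaps: the passage to $\Q$-coefficients, the use of Lemma \ref{LemmaW0Fp} for vertical divisors, and the transfer to the Arakelov pairing by the factor $\log\Norm\pfrak$ are all handled correctly.
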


Let $\Mcal = ([C_i.C_j])_{i,j}$ be the intersection matrix of the divisors $C_j$ in the given order. This is a symmetric matrix with integer coefficients and it can be regarded as a linear map $\Mcal:\Q^s\to \Q^s$. Let $U,H\subseteq \Q^s$ be the hyperplanes defined by the equations $\sum_j x_j=0$ and $\sum_i m_ix_i=0$ respectively.

\begin{lemma}\label{LemmaUH} $\Mcal$ restricts to a linear bijection $U\to H$.
\end{lemma}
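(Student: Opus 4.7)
The plan is to verify the lemma by a short linear algebra argument: check that $\Mcal$ sends $U$ into $H$, then invoke the semidefiniteness from Lemma \ref{LemmaSemiDef} to get injectivity, and finally conclude by dimension count.

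First I would check the inclusion $\Mcal(U)\subseteq H$. For $x=(x_1,\ldots,x_s)\in U$, the $i$-th coordinate of $\Mcal x$ is $\sum_j [C_i.C_j]x_j$, and so
\[
\sum_i m_i(\Mcal x)_i \;=\; \sum_j x_j \sum_i m_i[C_i.C_j] \;=\; \sum_j x_j\,[F_\pfrak.C_j].
\]
Now $[F_\pfrak.C_j]=0$ for every $j$: since $F_\pfrak$ equals the pullback of the closed point $\pfrak\in B$ under $\pi$ (as a divisor) it is numerically equivalent to any other fibre divisor, which can be chosen disjoint from $C_j$; equivalently, $F_\pfrak\in W^0$ and one can apply Lemma \ref{LemmaW0Fp} after extending the pairing to fibral combinations. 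In either form, the right-hand side vanishes, so $\Mcal x\in H$.

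Next I would show $\Mcal|_U$ is injective. Suppose $x\in U$ satisfies $\Mcal x=0$, and let $D=\sum_j x_jC_j$. Then
\[
[D.D]\;=\;x^T\Mcal x\;=\;0.
\]
By Lemma \ref{LemmaSemiDef}, the divisor $D$ must be a rational multiple of $F_\pfrak$, so $(x_1,\ldots,x_s)=\lambda(m_1,\ldots,m_s)$ for some $\lambda\in\Q$. Since $x\in U$, we have $0=\sum_j x_j=\lambda\sum_j m_j$, and as $\sum_j m_j>0$ this forces $\lambda=0$, hence $x=0$.

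Finally I would conclude by dimension count: $U$ and $H$ are both hyperplanes in $\Q^s$, so $\dim_\Q U=\dim_\Q H=s-1$. An injective linear map between finite-dimensional spaces of the same dimension is bijective, so $\Mcal|_U\colon U\to H$ is an isomorphism. There is no real obstacle here; the only substantive input is the negative semidefiniteness recorded in Lemma \ref{LemmaSemiDef} together with the standard fact $[F_\pfrak.C_j]=0$ for fibral components.
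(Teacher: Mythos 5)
Your proposal is correct and follows essentially the same route as the paper: show $\Mcal(U)\subseteq H$ via the vanishing of intersections with $F_\pfrak$ (Lemma \ref{LemmaW0Fp} / numerical triviality of the fibre), get injectivity on $U$ from the zero self-intersection case of Lemma \ref{LemmaSemiDef}, and conclude by comparing dimensions. No gaps.
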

\begin{proof}
Let ${\bf a}=(a_j)_j\in \Q^s$, then
$$
\Mcal {\bf a} = \left(\sum_ja_j [C_i.C_j]\right)_i
$$
and using Lemma \ref{LemmaW0Fp} we see that this vector is in $H$:
$$
\sum_im_i\sum_ja_j [C_i.C_j] = \left[ F_\pfrak . \sum_ja_jC_j \right] = 0.
$$
We have ${\bf a}\in \ker \Mcal$ if and only if for each $i$ we have
$$
0=\sum_ja_j [C_i.C_j]=\left[ C_i . \sum_ja_jC_j \right].
$$
This implies
$$
\left[ \sum_ja_jC_j . \sum_ja_jC_j \right]=0
$$
which gives $\sum_ja_jC_j=\lambda F_\pfrak$ for some $\lambda$, by Lemma \ref{LemmaSemiDef}. 

Let us now assume ${\bf a}\in U$. Since the coefficients of $F_\pfrak$ are $m_j>0$ but $\sum a_j=0$, we deduce $\lambda=0$. This means that $\Mcal$ is injective on $U$. We conclude because $\dim U=\dim H=s-1$.
\end{proof}

\begin{corollary}\label{CoroPhiFind} For $D\in W^0$, the divisor
$$
\Phi_\pfrak(D)=\sum_j a_j C_j
$$
is found by solving the system of $r+1$ linear equations in $r$ variables
$$
\begin{cases}
\sum_{j}a_j[C_1.C_j] &=  [C_1.D_{\rm fin}]\\
&\vdots \\
\sum_{j}a_j[C_s.C_j] &=  [C_s.D_{\rm fin}]\\
\sum_j a_j &=0
\end{cases}
$$
which has a unique rational solution.
\end{corollary}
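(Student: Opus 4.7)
The plan is to unpack the defining property of $\Phi_\pfrak(D)$ and match it against the listed system, then invoke Lemma \ref{LemmaUH} for existence and uniqueness. Writing $\Phi_\pfrak(D) = \sum_j a_j C_j$, membership in $V^0_\pfrak$ gives for free the last equation $\sum_j a_j = 0$. It remains to show that the requirement that $D - \Phi_\pfrak(D)$ be Arakelov-orthogonal to $V^0_\pfrak$ is equivalent to the first $s$ intersection equations. At this stage the archimedean part of $D$ plays no role, since it has trivial Arakelov intersection with the fibral classes spanning $V^0_\pfrak$, and the uniform factor $\log\Norm\pfrak$ relating the Arakelov pairing to $[-.-]$ on fibral divisors over $\pfrak$ cancels from each equation.

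For the key translation I would reason as follows. Since $V^0_\pfrak$ is the hyperplane $\{\sum b_j = 0\}$ inside $\bigoplus_j \Q\, C_j$, orthogonality of $D - \Phi_\pfrak(D)$ to $V^0_\pfrak$ is equivalent to saying that the vector $\bigl([C_j.(D - \Phi_\pfrak(D))]\bigr)_j$ is orthogonal (in the standard sense) to that hyperplane, hence proportional to $(1,\dots,1)$; denote the common value by $c$. Taking the $m_j$-weighted sum of these $s$ equations gives
$$
c\sum_j m_j \;=\; [F_\pfrak.(D-\Phi_\pfrak(D))].
$$
Now $[F_\pfrak.\Phi_\pfrak(D)] = 0$, because $F_\pfrak$ has zero intersection with every fibral divisor (all fibres are numerically equivalent), and $[F_\pfrak.D] = 0$ by Lemma \ref{LemmaW0Fp}. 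Since $\sum_j m_j > 0$, this forces $c = 0$, which is exactly the system $\sum_j a_j[C_i.C_j] = [C_i.D_{\mathrm{fin}}]$ claimed in the corollary.

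Finally, the same $F_\pfrak$-weighted calculation shows that $\bigl([C_i.D_{\mathrm{fin}}]\bigr)_i$ lies in $H$, so Lemma \ref{LemmaUH} produces a unique $(a_j)\in U$ with $\Mcal(a_j) = \bigl([C_i.D_{\mathrm{fin}}]\bigr)_i$. This vector solves the full $(s+1)$-equation system and does so uniquely: the equation $\sum_j a_j = 0$ simply records the constraint $(a_j)\in U$, which is precisely what pins down the preimage under $\Mcal$ (any two candidate solutions to the first $s$ equations differ by a multiple of $F_\pfrak$, and $U$ meets that line in a single point). The argument has no genuine obstacle; all the substantive content is concentrated in Lemma \ref{LemmaUH}, and the only thing to check is the compatibility condition $\bigl([C_i.D_{\mathrm{fin}}]\bigr)_i \in H$, which follows at once from $D\in W^0$.
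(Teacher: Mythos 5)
Your proof is correct and follows essentially the same route as the paper: verify that the right-hand side vector lies in $H$ using $[F_\pfrak.D_{\rm fin}]=0$ for $D\in W^0$, then invoke Lemma \ref{LemmaUH} for existence and uniqueness of the solution in $U$. The only difference is that you spell out, via the constant-value argument and the $m_j$-weighted sum, why orthogonality to the hyperplane $V^0_\pfrak$ is equivalent to the $s$ equations against each individual $C_i$, a step the paper treats as immediate (``these equations are the defining conditions'').
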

\begin{proof} Note that these equations are the defining conditions of the desired divisor $\Phi_\pfrak(D)\in V_\pfrak^0$. The solution exists and it is unique because 
$$
\sum_i m_i [C_i.D_{\rm fin}] = [F_\pfrak. D_{\rm fin}] = 0
$$
by definition of $W^0$. Here we use $(E.D)=[E.D_{\rm fin}]\log \Norm\pfrak$ when $E$ is supported on the fibre at $\pfrak$. Then we apply Lemma \ref{LemmaUH}.
\end{proof}

We note that if $\Xcal_\pfrak$ is irreducible (for instance, if $\pfrak$ is a prime of good reduction) then in fact $\Phi_\pfrak(D)=0$.

\subsection{The quantities $\phi_\pfrak(X)$}\label{SecPhip}

In this section we define the numbers $\phi_\pfrak(X)$.  We keep the notation of Sections \ref{SecFibral1} and \ref{SecFibral2}. Let $g_j$ be the arithmetic genus of $C_j$ (recall that $C_j$ is reduced).  The next result is a consequence of Proposition 9.1.35 and theorem 9.1.37 from \cite{Liu}.
\begin{lemma}\label{LemmaSum0} We have 
$$
[F_\pfrak.\omega] = 2(g-1)  =  \mu_\pfrak+\sum_j 2(g_j-1)m_j
$$
where $\mu_\pfrak = -\sum_{j} m_j[C_j.C_j]\ge 0$. Furthermore, $\mu_\pfrak=0$ when $\Xcal_\pfrak$ is irreducible.
\end{lemma}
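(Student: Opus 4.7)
The plan is to reduce both equalities to the adjunction formula applied componentwise together with the numerical equivalence of fibres, and then to read off the sign of $\mu_\pfrak$ from the negative semidefiniteness of the fibral intersection form.

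First I would apply the adjunction formula (this is Liu's Theorem 9.1.37, valid for effective divisors on a regular arithmetic surface) to each reduced irreducible component $C_j$ of $\Xcal_\pfrak$. Since $C_j$ has arithmetic genus $g_j$, this gives
$$[C_j.C_j] + [C_j.\omega] = 2(g_j-1),$$
so $[C_j.\omega] = 2(g_j-1) - [C_j.C_j]$. Multiplying by $m_j$ and summing over $j$ yields
$$[F_\pfrak.\omega] \;=\; \sum_j 2(g_j-1)m_j \;-\; \sum_j m_j[C_j.C_j] \;=\; \sum_j 2(g_j-1)m_j + \mu_\pfrak,$$
which is the second equality. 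For the first equality $[F_\pfrak.\omega] = 2(g-1)$, I would use that any two closed fibres are numerically equivalent (because distinct closed fibres are disjoint, hence have zero intersection with any vertical divisor, while they pair with any horizontal divisor through its generic-fibre degree). Therefore $[F_\pfrak.\omega]$ equals the degree of $\omega$ restricted to the generic fibre $X_\eta$, which is $2g-2$ since $X$ is smooth of genus $g$; this is the content of Proposition 9.1.35 in Liu.

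For $\mu_\pfrak \geq 0$, I would invoke Lemma \ref{LemmaSemiDef}: restricted to the $\Z$-span of $C_1,\dots,C_s$ the geometric pairing is negative semidefinite, so $[C_j.C_j]\le 0$ for every $j$ (equivalently: $[C_i.F_\pfrak]=0$ by numerical equivalence of $F_\pfrak$ with a fibre at another prime, hence $m_i[C_i.C_i] = -\sum_{j\neq i}m_j[C_i.C_j]\le 0$ because off-diagonal intersection numbers are non-negative). Thus $-m_j[C_j.C_j]\ge 0$ for all $j$ and $\mu_\pfrak\ge 0$. When $\Xcal_\pfrak$ is irreducible, $s=1$ and $F_\pfrak = m_1 C_1$, so $0=[F_\pfrak.F_\pfrak]=m_1^2[C_1.C_1]$ forces $[C_1.C_1]=0$ and hence $\mu_\pfrak=0$.

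I do not anticipate a real obstacle here; the only subtlety is checking that the cited formulas in Liu apply in our setting (a minimal regular arithmetic surface with possibly non-reduced, possibly singular special fibre), but that is exactly the generality in which Liu states them, with $g_j$ being the arithmetic genus of the reduced component $C_j$.
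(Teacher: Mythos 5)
Your proof is correct and is essentially the paper's own argument: the paper proves this lemma simply by citing Liu's Proposition 9.1.35 (constancy of the fibral degree of $\omega$, giving $[F_\pfrak.\omega]=2g-2$) and Theorem 9.1.37 (adjunction for vertical divisors), which is exactly what you spell out componentwise. Your justification of $\mu_\pfrak\ge 0$ via Lemma \ref{LemmaSemiDef} (or equivalently via $[C_i.F_\pfrak]=0$ and non-negativity of the off-diagonal intersections), and of the irreducible case from $[F_\pfrak.F_\pfrak]=0$, is the standard argument left implicit in the paper.
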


\begin{lemma} \label{LemmaOrth} Let $P\in X(K)$. In the previous setting, the divisor 
$$
\omega - 2(g-1)D_P
$$
is orthogonal to all the divisors $F_\pfrak$.
\end{lemma}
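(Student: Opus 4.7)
The plan is to verify the orthogonality $(\omega-2(g-1)D_P \,.\, F_\pfrak)=0$ separately for each finite prime $\pfrak$. Since $F_\pfrak$ is a fibral divisor supported over $\pfrak$, the third bullet of Section \ref{SecSpecial} reduces the Arakelov intersection to a geometric one:
$$
(\omega-2(g-1)D_P \,.\, F_\pfrak)=\bigl([\omega.F_\pfrak]-2(g-1)[D_P.F_\pfrak]\bigr)\log\Norm\pfrak.
$$
(Note that this reduction does not see the archimedean metric on $\widehat\omega$ nor the archimedean parts of any Arakelov divisor, so we may work with the underlying geometric divisor for $\omega$.) Thus it suffices to evaluate the two geometric intersection numbers.

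First, Lemma \ref{LemmaSum0} gives directly $[F_\pfrak.\omega]=2(g-1)$, so the first term equals $2(g-1)$. Second, since $P\in X(K)$ corresponds to a section $s:B\to \Xcal$ of $\pi$, and the fibral divisor $F_\pfrak$ coincides with the pullback $\pi^*[\pfrak]$ of the closed point $\pfrak\in B$, the projection formula applied to the finite morphism $\pi|_{D_P}:D_P\xrightarrow{\ \sim\ } B$ yields
$$
[D_P.F_\pfrak]=[D_P.\pi^*[\pfrak]]=\deg_B\bigl((\pi|_{D_P})_*(D_P)\cdot[\pfrak]\bigr)=\deg_B[\pfrak]=1.
$$
Consequently the bracket $[\omega.F_\pfrak]-2(g-1)[D_P.F_\pfrak]=2(g-1)-2(g-1)=0$, which gives the desired vanishing after multiplication by $\log\Norm\pfrak$.

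There is essentially no obstacle here: the content of the lemma is entirely in the two inputs, namely the formula $[F_\pfrak.\omega]=2(g-1)$ (which is a manifestation of the adjunction/arithmetic-genus computation already recorded in Lemma \ref{LemmaSum0}) and the section-fibre identity $[D_P.F_\pfrak]=1$ (a standard consequence of flatness of $\pi$ together with $s$ being a section). The only thing to watch is that in the expression $\omega-2(g-1)D_P$ we may treat $\omega$ as the canonical geometric divisor when intersecting against a purely vertical fibral divisor, since the archimedean components of $\widehat\omega$ contribute nothing to the intersection with $F_\pfrak$.
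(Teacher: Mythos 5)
Your proof is correct and follows essentially the same route as the paper: both reduce to the geometric intersection numbers $[F_\pfrak.\omega]=2(g-1)$ (Lemma \ref{LemmaSum0}) and $[D_P.F_\pfrak]=1$ (because $D_P$ is a section), the paper simply stating the latter as ``$D_P$ meets $F_\pfrak$ in a single component with multiplicity $1$'' where you justify it via $F_\pfrak=\pi^*[\pfrak]$ and the projection formula. Your extra remarks on the archimedean parts not contributing and the reduction to geometric intersections are accurate but add no new idea.
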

\begin{proof} Note that $D_P$ intersects $F_\pfrak$ at a single component, of multiplicity $1$. Thus we get
$$
[F_\pfrak\, .\, \omega - 2(g-1)D_P ] = [F_\pfrak.\omega]- 2(g-1)=0
$$
by Lemma \ref{LemmaSum0}.
\end{proof}

Given a prime $\pfrak$ we let $J_\pfrak$ be the set of indices $j$ for which $m_j=1$. Let $d_{i,k}$ be $1$ if $i=k$ and $0$ otherwise. For each $k\in J_\pfrak$ let  consider the linear system 
$$
\Xi_k=\Xi_k(\pfrak):\quad 
\begin{cases}
\sum_{j}b^{(k)}_j[C_1.C_j] &=  2(g_1-1) -\frac{2}{m_1}(g-1)d_{1,k} -  [C_1.C_1] \\
&\vdots \\
\sum_{j}b^{(k)}_j[C_s.C_j] &=  2(g_r-1) -\frac{2}{m_s}(g-1)d_{s,k} - [C_s.C_s]\\
\sum_j b^{(k)}_j &=0
\end{cases}
$$
on the variables $b^{(k)}_1,...,b^{(k)}_s$.

\begin{lemma} For each $k\in J_\pfrak$, the system $\Xi_k$ has a unique rational solution. 
\end{lemma}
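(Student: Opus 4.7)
The plan is to recognize this system as a linear equation of the form $\Mcal \mathbf{b}^{(k)} = \mathbf{r}^{(k)}$ in the vector $\mathbf{b}^{(k)} = (b^{(k)}_1,\ldots,b^{(k)}_s)$, subject to the constraint $\mathbf{b}^{(k)}\in U = \{\mathbf{x}\in\Q^s : \sum_j x_j=0\}$, where $\Mcal = ([C_i.C_j])_{i,j}$ is the intersection matrix and $\mathbf{r}^{(k)}$ is the vector with $i$-th coordinate
\[
r^{(k)}_i = 2(g_i-1) - \tfrac{2}{m_i}(g-1) d_{i,k} - [C_i.C_i].
\]
By Lemma \ref{LemmaUH}, $\Mcal$ restricts to a linear bijection $U \to H$, where $H = \{\mathbf{x}\in\Q^s : \sum_i m_i x_i = 0\}$. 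Hence the system $\Xi_k$ will have a unique solution in $\Q^s$ (necessarily in $U$, which encodes the final equation of $\Xi_k$) if and only if $\mathbf{r}^{(k)}\in H$.

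The core step is therefore to verify that $\sum_i m_i r^{(k)}_i = 0$. Writing this out,
\[
\sum_i m_i r^{(k)}_i = \sum_i 2(g_i-1)m_i \; - \; 2(g-1)\sum_i d_{i,k} \; - \; \sum_i m_i[C_i.C_i].
\]
Now I would invoke Lemma \ref{LemmaSum0}: the first summand equals $2(g-1) - \mu_\pfrak$, and the last summand equals $-\mu_\pfrak$, i.e.\ $-\sum_i m_i[C_i.C_i] = \mu_\pfrak$. For the middle summand, since $k\in J_\pfrak$ is fixed, $\sum_i d_{i,k}=1$, so this contributes $-2(g-1)$. Adding these,
\[
\sum_i m_i r^{(k)}_i = \bigl(2(g-1)-\mu_\pfrak\bigr) - 2(g-1) + \mu_\pfrak = 0,
\]
which places $\mathbf{r}^{(k)}$ in $H$, as required.

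I expect no serious obstacle: the whole content is a one-line bookkeeping identity, and the main work is making sure the sign conventions match between the system $\Xi_k$ and Lemma \ref{LemmaSum0}. The only subtlety is that one must be careful that the final equation $\sum_j b^{(k)}_j=0$ is not an additional constraint imposed on a preexisting unique solution of the first $s$ equations, but rather the selection mechanism within the $1$-parameter family of solutions (translates by multiples of $F_\pfrak$'s coefficient vector) that picks out the unique representative in $U$. This is precisely the content of Lemma \ref{LemmaUH}, so the argument concludes there.
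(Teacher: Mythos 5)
Your proof is correct and follows the same route as the paper: you check that the right-hand side vector, weighted by the multiplicities $m_i$, sums to zero using Lemma \ref{LemmaSum0}, and then conclude existence and uniqueness from the bijection $\Mcal:U\to H$ of Lemma \ref{LemmaUH}. The extra remark about the last equation selecting the unique representative in $U$ is exactly the intended reading, so nothing is missing.
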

\begin{proof}
By Lemma \ref{LemmaSum0} we see that the sum of the expressions on the right hand side of the previous equations, weighted by the $m_j$'s, is
$$
\sum_j 2(g_j-1)m_j - 2(g-1) - \sum_j m_j[C_j.C_j] =\sum_j 2(g_j-1)m_j - 2(g-1) +\mu_{\pfrak} =0.
$$
We conclude by  Lemma \ref{LemmaUH}.
\end{proof}

Finally, the quantity $\phi_\pfrak(X)$ is defined as the maximum, as $k$ varies in $J_\pfrak$, of the absolute value of the (geometric) self-intersection
$$
\left[ \sum_j  b^{(k)}_j C_j\, .\,\sum_j  b^{(k)}_j C_j\right].
$$
We note that $\phi_\pfrak(X)\ge 0$, it equals $0$ if $\Xcal_\pfrak$ is irreducible, and it is rational. 

\begin{theorem}\label{Thmphip} In the previous setting, for every $P\in X(K)$ and every prime $\pfrak$ we have
$$
\left|[\Phi_\pfrak(\widehat{\omega} - 2(g-1)D_P)\, .\, \Phi_\pfrak(\widehat{\omega} - 2(g-1)D_P )]\right| \le \phi_\pfrak.
$$
\end{theorem}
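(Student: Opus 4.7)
The plan is to identify the divisor $\Phi_\pfrak(\widehat{\omega} - 2(g-1)D_P)$ explicitly via Corollary \ref{CoroPhiFind} and to recognize it as the unique rational solution of the system $\Xi_k$ for the index $k$ corresponding to the component through which $P$ specializes. The conclusion is then immediate from the definition of $\phi_\pfrak(X)$ as a maximum.

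First I would observe that the $K$-rational point $P$ extends uniquely to a section $s\colon B\to \Xcal$ since $\pi$ is proper. Because $\Xcal$ is regular, this section meets the special fibre $\Xcal_\pfrak$ at a regular point of $\Xcal$, which forces the reduced component $C_k$ through which $D_P$ passes to have multiplicity $m_k=1$, i.e. $k\in J_\pfrak$. Transversality at that single smooth point then gives $[C_i.D_P]=d_{i,k}$ for every $i$.

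Next, the divisor $D = \widehat{\omega} - 2(g-1)D_P$ has degree $2(g-1)-2(g-1)=0$ on the generic fibre, so $D\in W^0$ and Corollary \ref{CoroPhiFind} yields $\Phi_\pfrak(D)=\sum_j a_j C_j$, where $(a_j)$ is the unique rational solution of
$$
\sum_j a_j [C_i.C_j] = [C_i.D_{\rm fin}],\qquad \sum_j a_j=0.
$$
The key step is to compute the right-hand side. The adjunction formula applied to the reduced vertical curve $C_i$ of arithmetic genus $g_i$ gives $[C_i.\omega] = 2(g_i-1)-[C_i.C_i]$. Combining with $[C_i.D_P]=d_{i,k}$ and using $m_k=1$ to rewrite $d_{i,k}=d_{i,k}/m_i$ (both expressions being $0$ or equal to $1$ with $m_i=1$ in the nonzero case), we get
$$
[C_i.D_{\rm fin}] = 2(g_i-1) - [C_i.C_i] - \frac{2(g-1)}{m_i}\, d_{i,k},
$$
which is precisely the right-hand side of $\Xi_k$. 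By uniqueness, $a_j=b_j^{(k)}$ for all $j$, and therefore $\Phi_\pfrak(\widehat{\omega}-2(g-1)D_P) = \sum_j b_j^{(k)}C_j$.

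The claim follows at once, since the geometric self-intersection $\bigl|[\sum_j b_j^{(k)}C_j.\sum_j b_j^{(k)}C_j]\bigr|$ is one of the quantities over which the maximum defining $\phi_\pfrak(X)$ is taken. The main subtlety in the argument is the regularity-of-sections step guaranteeing $k\in J_\pfrak$; once that is in place, the rest is bookkeeping via adjunction and matching the two linear systems.
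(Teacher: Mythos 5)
Your proposal is correct and follows essentially the same route as the paper: the section through $P$ meets a unique multiplicity-one component $C_k$ (so $k\in J_\pfrak$ and $[C_i.D_P]=d_{i,k}$), Corollary \ref{CoroPhiFind} plus adjunction on the vertical curves identifies the linear system for $\Phi_\pfrak(\widehat{\omega}-2(g-1)D_P)$ with $\Xi_k$, and the bound follows from the definition of $\phi_\pfrak(X)$ as a maximum over $k\in J_\pfrak$. Your explicit justification of $k\in J_\pfrak$ via regularity of $\Xcal$ is a detail the paper leaves implicit, but the argument is the same.
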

\begin{proof}
Let $k$ be such that $D_P$ meets $C_k$; this index is unique and in fact $k\in J_\pfrak$. By Corollary \ref{CoroPhiFind}, to compute $\Phi_\pfrak(\widehat{\omega} - 2(g-1)D_P)$ we must solve the linear system
$$
\begin{cases}
\sum_{j}a_j[C_1.C_j] &=  [C_1\, .\, \omega - 2(g-1)D_P]\\
&\vdots \\
\sum_{j}a_j[C_s.C_j] &=  [C_s\, .\, \omega - 2(g-1)D_P]\\
\sum_j a_j &=0.
\end{cases}
$$
We note that by adjunction
$$
[C_j\, .\, \omega - 2(g-1)D_P] = [C_j.\omega] - 2(g-1)d_{j,k} = 2(g_j-1)-[C_j.C_j]- 2(g-1)d_{j,k} 
$$
so the previous linear system is precisely $\Xi_k=\Xi_k(\pfrak)$. 
\end{proof}

\subsection{The Faltings--Hriljac theorem}\label{SecHodge} We keep the notation of Sections \ref{SecFibral1} and \ref{SecFibral2}.

Let $J$ be the Jacobian of $X$ and let $\Theta$ be the Theta divisor class on $J$. We let $\hat{h}$ be the N\'eron--Tate height on $J(\overline{K})$ associated to the divisor class $2\Theta$, normalized to $K$.

The rule $P\mapsto [\Omega^1_X\otimes \Ocal(-2(g-1) P )]$ defines a map $j:X\to J$ which, in general is not an embedding. In this way $\hat{h}\circ j$ is a canonically defined height on $X$.

Faltings \cite{FaltingsCalculus} (in the semistable case) and Hriljac \cite{Hriljac} (in general) proved an important relation between $\hat{h}$ and the Arakelov intersection pairing.

\begin{theorem} Let $D$ be an Arakelov divisor orthogonal to all fibre components  (in particular, the degree of $D|_X$ on $X$ is $0$). Then $D|_X$ defines an element $[D]\in J(K)$ which staisfies
$$
(D.D)=-\hat{h}([D]).
$$ 
\end{theorem}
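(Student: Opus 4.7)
The plan is to establish the identity by decomposing both sides into local contributions at each place of $K$ and matching them one at a time. First I would write $D = D_{\rm fin} + \sum_{v:K\to\C}\alpha_v F_v$, so that $[D] := [D|_X] = [D_{\rm fin}|_X]$ is, by hypothesis, the class of a degree-zero divisor on $X$, hence an element of $J(K)$.

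The key input on the height side is the N\'eron decomposition: for any degree-zero divisor class $x$ on $X$, the N\'eron--Tate height with respect to $2\Theta$ can be written as a sum $\hat{h}(x) = \sum_v \lambda_v(x)$ of local canonical symbols, one at each place of $K$. I would then verify, place by place, that each $\lambda_v([D])$ matches minus the corresponding local contribution to $(D.D)$.

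At a finite place $\pfrak$, the N\'eron symbol is computed as an intersection multiplicity on the minimal regular model weighted by $\log \Norm\pfrak$; the hypothesis that $D$ is orthogonal to all fibre components is precisely the normalization needed to identify the N\'eron symbol with the Arakelov intersection contribution at $\pfrak$, without having to subtract off the fibral correction $\Phi_\pfrak$ by hand. At an archimedean place $v$, the N\'eron symbol is realized by the canonical logarithmic Green function $\gamma_v$, which is the same admissible Green function used to define the archimedean Arakelov pairing (Section~\ref{SecSpecial}); the scalar $\alpha_v$ then accounts for the remaining archimedean metric contribution through the degree formula for $s^*\overline{\Ocal}(D)$. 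Summing over all places $v$ and $\pfrak$ gives the claimed equality.

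The main obstacle will be verifying that the signs and numerical normalizations align: in particular, that the choice $2\Theta$ (rather than $\Theta$), together with the factor of $2$ arising from the symmetry of the intersection pairing, yields exactly the coefficient $-1$ on the right-hand side, and that the limiting procedure in the N\'eron symbol (needed when the representatives of $[D|_X]$ share support) is compatible with the geometric intersection multiplicities on $\Xcal$. This is the content of Faltings \cite{FaltingsCalculus} in the semistable case and of Hriljac \cite{Hriljac} in general; in our possibly non-semistable setting the required extension, using the minimal regular model and the fibral corrections $\Phi_\pfrak$ from Section~\ref{SecFibral2}, is spelled out in Lang \cite{Lang}, whose argument I would follow.
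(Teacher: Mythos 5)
The paper does not prove this statement at all: it imports it, citing Faltings (semistable case) and Hriljac (general case), with Lang's book supplying the non-semistable bookkeeping. Your outline is exactly the standard Néron local-symbol decomposition argument contained in those references (orthogonality to fibral components matching the non-archimedean symbols, admissible Green functions matching the archimedean ones, and the $2\Theta$ normalization giving the factor $-1$; note also that since $\deg(D|_X)=0$ the $\alpha_v F_v$ terms simply drop out of $(D.D)$), so it is correct and essentially the same route as the paper's cited proof.
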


To ease notation, the Arakelov self-intersection will be denoted by a square.  For a $K$-rational point $P$ let us define $\Psi_{\pfrak,P}=\Phi_\pfrak(\widehat{\omega}-2(g-1)D_P)$ and $\Psi_{P}=\sum_{\pfrak}\Psi_{\pfrak,P}$. We note that if $\Xcal_\pfrak$ is irreducible then $\Psi_{\pfrak,P}=0$.

From Lemma \ref{LemmaOrth} we deduce:

\begin{corollary}\label{CoroHhat} For all $K$-rational points $P\in X(K)$ we have
$$
(\widehat{\omega} - 2(g-1)D_P - \Psi_P)^2= -\hat{h}(j(P)).
$$
\end{corollary}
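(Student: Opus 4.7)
\textbf{Proof proposal for Corollary \ref{CoroHhat}.} The plan is to apply the Faltings--Hriljac theorem to the Arakelov divisor $D' := \widehat{\omega} - 2(g-1)D_P - \Psi_P$. For this, I need to verify the two hypotheses: that $D'$ is orthogonal to all fibre components, and that the class of $D'|_X$ in $J(K)$ equals $j(P)$. Both are essentially built into the setup.

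First I would check that $\widehat{\omega} - 2(g-1)D_P$ belongs to $W^0$, i.e.\ that its restriction to the generic fibre $X$ has degree $0$. Since $\omega|_X$ is the canonical sheaf of $X$ with degree $2g-2$ and $D_P|_X$ is the point $P$ with degree $1$, we indeed obtain $(\omega - 2(g-1)D_P)|_X$ of degree $(2g-2) - 2(g-1) = 0$. Consequently the definition $\Psi_P = \Phi(\widehat{\omega} - 2(g-1)D_P)$ makes sense, and by the theorem in Section \ref{SecFibral1}, the difference
$$
D' = (\widehat{\omega} - 2(g-1)D_P) - \Phi(\widehat{\omega} - 2(g-1)D_P)
$$
is orthogonal to every vertical Arakelov divisor; in particular, $D'$ is orthogonal to all fibre components, so it satisfies the hypothesis of the Faltings--Hriljac theorem.

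Next I would identify the class $[D'] \in J(K)$. Since $\Psi_P = \sum_\pfrak \Psi_{\pfrak,P}$ is vertical, its restriction to the generic fibre is zero. Hence
$$
D'|_X = (\omega - 2(g-1)D_P)|_X = \omega_X - 2(g-1)P,
$$
and the associated line bundle class in $\Pic^0(X) = J(K)$ is exactly $[\Omega^1_X \otimes \Ocal(-2(g-1)P)] = j(P)$ by the definition of the map $j$ recalled in Section \ref{SecHodge}.

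Applying the Faltings--Hriljac theorem from Section \ref{SecHodge} then yields $(D')^2 = -\hat{h}([D']) = -\hat{h}(j(P))$, which is the desired identity. The only potentially delicate step is the orthogonality in Step 2, but this is handled by the theorem stated in Section \ref{SecFibral1} once one observes that $\widehat{\omega} - 2(g-1)D_P \in W^0$; everything else is bookkeeping with the definitions of $\Psi_P$ and $j$.
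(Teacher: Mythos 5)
Your proposal is correct and follows essentially the same route as the paper: the paper deduces the corollary from Lemma \ref{LemmaOrth} together with the fibral-correction construction of Section \ref{SecFibral1} and the Faltings--Hriljac theorem, exactly as you do (your verification that $\widehat{\omega}-2(g-1)D_P$ has degree $0$ on the generic fibre, hence lies in $W^0$, plays the role the paper assigns to Lemma \ref{LemmaOrth}). The identification of $[D'|_X]$ with $j(P)$ and the application of Faltings--Hriljac are precisely the intended bookkeeping.
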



\section{Proof of the results}

\subsection{Effective Mumford gap} 

\begin{proof}[Proof of Theorem \ref{ThmGap}] We will use Lemma \ref{LemmaOrth} and Corollary \ref{CoroHhat}. For short let us write 
$$
E_P=\widehat{\omega} - 2(g-1)D_P
$$
and similarly for $E_Q$. To start, let us first drop the assumption $P\ne Q$.

Since $\Psi_P$ and $\Psi_Q$ are fibral Arakelov divisors, we have
$$
\begin{aligned}
-\langle j(P),j(Q)\rangle &= -\frac{1}{2}\left(\hat{h}(j(P)+j(Q))-\hat{h}(j(P))-\hat{h}(j(P))\right)\\
&= (E_P-\Psi_P\, .\, E_Q-\Psi_Q)\\
&= (E_P.E_Q) - (\Psi_P.E_Q)\\
&=(E_P.E_Q) - (\Psi_P\, .\, E_Q-\Psi_Q+ \Psi_Q)\\
&=(E_P.E_Q)- (\Psi_P.\Psi_Q).
\end{aligned}
$$
Next we have
$$
\begin{aligned}
(E_P.E_Q)&= \widehat{\omega}^2 - 2(g-1)((D_P+D_Q).\widehat{\omega}) + 4(g-1)^2(D_P.D_Q)
\end{aligned}
$$
from which we find
$$
\langle j(P),j(Q)\rangle = 2(g-1)((D_P+D_Q).\widehat{\omega}) - 4(g-1)^2(D_P.D_Q) -\widehat{\omega}^2+ (\Psi_P.\Psi_Q).
$$
In particular, taking $P=Q$ and using arithmetic adjunction (Theorem \ref{ThmAdjunction}) one gets
$$
\hat{h}(j(P))=4g(g-1)(D_P.\widehat{\omega}) -  \widehat{\omega}^2 + \Psi_P^2
$$
and similarly for $Q$. Therefore
$$
\begin{aligned}
\hat{h}(j(P))&+\hat{h}(j(Q))- 2g\langle j(P),j(Q)\rangle \\
&= 8g(g-1)^2(D_P.D_Q) + 2(g-1)\widehat{\omega}^2+\Psi_P^2+\Psi_Q^2- 2g(\Psi_P.\Psi_Q).
\end{aligned}
$$
Here we note that $\Psi_P^2 = \sum_{\pfrak\in S}\Psi_{\pfrak,P}^2$ has absolute value bounded by $\sum_{\pfrak}\phi_\pfrak\log \Norm\pfrak$ and similarly for $Q$ (see Theorem \ref{Thmphip}). Furthermore,
$$
|(\Psi_P.\Psi_Q)|\le \sqrt{|\Psi_P^2|\cdot |\Psi_Q^2|}\le \sum_{\pfrak}\phi_\pfrak\log \Norm\pfrak
$$
by Cauchy--Schwarz, as the Arakelov intersection is negative semidefinite on fibral divisors.

Let us now impose the assumption $P\ne Q$ again. By Theorem \ref{ThmSelf}, the estimate for $(D_P.D_Q)$ in Section \ref{SecSpecial}, and the definition of $\phi_\pfrak(X)$ we get
$$
\begin{aligned}
\hat{h}(j(P))&+\hat{h}(j(Q))- 2g\langle j(P),j(Q)\rangle \\
&\ge -8g(g-1)^2\sum_{v:K\to \C} \gamma_v(v(P),v(Q)) -2(g+1)\sum_\pfrak\phi_\pfrak(X)\log \Norm\pfrak.
\end{aligned}
$$

Using Theorem \ref{ThmBdGreen} we get
$$
\begin{aligned}
8g(g-1)^2&\sum_{v:K\to \C}  \gamma_v(v(P),v(Q))\\
& < \frac{(g-1)^2}{3}\max\{6,g+1\}\sum_{v:K\to \C}\delta(X_v) + 2[K:\Q] g (g-1)^2\left(  3g\log g + 16  \right)
\end{aligned}
$$
which finishes the proof.
\end{proof}

\subsection{The main result} Let $X$ be as in Theorem \ref{ThmMain}, in particular, $g\ge 2$.

The next result is Equation (5) in \cite{Fujimori} (note that their function $f$ is our function $j:X\to J$):
\begin{lemma}\label{LemmaAutHt} If $P\in X(K)$ and $\sigma\in G$, then $\hat{h}(j(P))=\hat{h}(j(\sigma(P)))$.
\end{lemma}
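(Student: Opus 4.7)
The plan is to push the identity on heights through the action of $\sigma$ on the Jacobian. Any automorphism $\sigma \in G = \Aut_K(X)$ induces, by pushforward on divisors and passing to linear equivalence classes of degree zero, an automorphism of abelian varieties $\sigma_\ast \colon J \to J$ defined over $K$. It fixes the origin of $J$ because pushforward preserves the zero class.

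Next I would verify the identity $\sigma_\ast \circ j = j \circ \sigma$ as maps $X(\bar K) \to J(\bar K)$. Since $\sigma$ is an automorphism of $X$, we have $\sigma_\ast \omega = \omega$ at the level of canonical divisor classes on $X$ (the canonical class is intrinsic). Therefore
\[
\sigma_\ast(j(P)) \;=\; \sigma_\ast [\omega - 2(g-1)\,P] \;=\; [\omega - 2(g-1)\,\sigma(P)] \;=\; j(\sigma(P)).
\]
So the statement reduces to showing that the N\'eron--Tate height $\hat h$, associated to $2\Theta$, is invariant under the automorphism $\sigma_\ast$ of $J$.

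For this step I would invoke the functoriality of N\'eron--Tate heights: for any morphism $\varphi \colon J \to J$ of abelian varieties over $K$ and any divisor class $D$ on $J$, one has $\hat h_D \circ \varphi = \hat h_{\varphi^\ast D}$. Thus the task becomes comparing $\sigma_\ast^{\,\ast}(2\Theta)$ with $2\Theta$. By the Torelli theorem, $\sigma$ induces an automorphism of the \emph{polarized} Jacobian $(J,[\Theta])$, so $\sigma_\ast^{\,\ast}\Theta$ and $\Theta$ differ by a translation; equivalently $\sigma_\ast^{\,\ast}\Theta \equiv \Theta$ in $\NS(J)$. The symmetric divisor class $2\Theta$ is therefore preserved by $\sigma_\ast^{\,\ast}$ modulo $\Pic^0(J)$, and the linear part contributed by the translation is killed when doubling and using that $\hat h_{2\Theta}$ is a quadratic form determined by the class of $2\Theta$ in $\NS(J) \otimes \R$. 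Combining with the previous step gives $\hat h(j(\sigma(P))) = \hat h(\sigma_\ast(j(P))) = \hat h(j(P))$.

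The only genuinely delicate point is the Torelli-type assertion that $\sigma_\ast$ preserves the class $[\Theta]$ up to translation; everything else is formal from the definition of $j$ and from general properties of N\'eron--Tate heights on abelian varieties. Since the statement is standard and is explicitly recorded in \cite{Fujimori}, one may alternatively just cite it, which is what the authors appear to do.
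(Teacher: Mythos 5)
Your proposal is correct in substance, but it takes a genuinely different route from the paper: the paper gives no argument at all and simply quotes the statement as Equation (5) of \cite{Fujimori}, whereas you reprove it from functoriality of the Jacobian and of N\'eron--Tate heights. Your reduction is the right one: $\sigma_*\colon J\to J$ is a $K$-automorphism fixing the origin, the equivariance $\sigma_*\circ j=j\circ\sigma$ follows from $\sigma_*\omega=\omega$, and then everything rests on $\hat h\circ\sigma_*=\hat h$. Two small points deserve tightening. First, you do not need the Torelli theorem (which is the converse direction); the fact that $\sigma_*$ preserves the theta polarization class in $\NS(J)$ is elementary, since $\sigma_*$ maps the subvariety $W_{g-1}\subset\Pic^{g-1}(X)$ to itself and $\Theta$ is a translate of it. Second, the phrase ``the linear part contributed by the translation is killed when doubling'' is not quite the reason: doubling a $\Pic^0$-class does not make its height contribution vanish. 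The correct justification, which you in fact also state, is that the canonical height quadratic form depends only on the image of $2\Theta$ in $\NS(J)\otimes\R$; equivalently, taking $\Theta$ symmetric, the discrepancy $\sigma_*^*\Theta-\Theta$ is a symmetric element of $\Pic^0(J)$, hence $2$-torsion, hence contributes zero canonical height. With those adjustments your argument is complete and self-contained, which is a modest gain over the paper's bare citation; the citation, on the other hand, is shorter and suffices for the paper's purposes.
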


\begin{lemma} \label{LemmaPosM} We have $M(X)>0$.
\end{lemma}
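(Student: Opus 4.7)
The plan is to exploit the fact that the only potentially negative contribution to $M(X)$ comes from the first line, and to show that it is dominated by the third (manifestly positive) line.

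First I would observe that the middle line is non-negative: this is immediate from the construction in Section \ref{SecPhip}, which shows $\phi_\pfrak(X) \ge 0$, together with $\log \Norm\pfrak > 0$. So this term can be ignored.

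Next I would invoke Wilms' lower bound (Theorem \ref{ThmWilms}), $\delta(X_v) \ge -2g\log(2\pi^4)$ for each archimedean embedding $v:K\to \C$. Summing over the $[K:\Q]$ such embeddings yields
$$
\sum_{v:K\to \C}\delta(X_v) \ge -2g[K:\Q]\log(2\pi^4).
$$
Substituting this lower bound for the first summand of $M(X)$ and combining with the third summand, it suffices to show that
$$
2[K:\Q]g(g-1)^2\left(3g\log g + 16 - \frac{\max\{6,g+1\}}{3}\log(2\pi^4)\right) > 0.
$$
Since $[K:\Q]$, $g$, and $(g-1)^2$ are positive (as $g\ge 2$), it is enough to check that the parenthesized factor is positive for all integers $g\ge 2$.

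The final step is this elementary check. Numerically $\log(2\pi^4) < 5.28$. For $2\le g\le 5$ we have $\max\{6,g+1\}=6$, and the bracket is at least $3g\log g + 16 - 2\log(2\pi^4) > 16 - 2(5.28) > 5$. For $g\ge 5$ we have $\max\{6,g+1\}=g+1$, and one easily checks that $3g\log g + 16 > \frac{5.28}{3}(g+1)$ for all such $g$, since $3g\log g$ grows faster than linearly while $\frac{5.28}{3}(g+1) < 1.77(g+1)$. I do not anticipate any obstacle here; the only thing to be careful with is the case split in $\max\{6,g+1\}$, but both cases are straightforward.
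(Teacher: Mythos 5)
Your proof is correct and follows essentially the same route as the paper: drop the non-negative $\phi_\pfrak$ term, apply Wilms' bound (Theorem \ref{ThmWilms}) to control $\sum_v \delta(X_v)$, and verify positivity of the remaining factor by an elementary case check on $\max\{6,g+1\}$ for $g\ge 2$.
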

\begin{proof} From the definition we know $\phi_\pfrak\ge 0$, and by Theorem \ref{ThmWilms} we have 
$$
\sum_{v:K\to \C} \delta(X_v)\ge -2[K:\Q]g\log(2\pi^4).
$$
Hence, $M(X)/(2g(g-1)^2[K:\Q])$ is bigger than or equal to
$$
\frac{-1}{3}\max\{6,g+1\}\log(2\pi^4) + 3g\log g+16>9
$$
because $g\ge 2$.
\end{proof}

Let us now prove our bound for the height of rational points.

\begin{proof}[Proof of Theorem \ref{ThmMain}] Let $P\in X(K)$ be a rational point with trivial $H$-stabilizer. Let $\Omega \subseteq X(K)$ be its orbit under $H$; thus, $\#\Omega=n$ where $n=\# H$. 

Consider the euclidean space $V=J(K)\otimes \R$ of dimension $r=\rk J(K)$ under the N\'eron--Tate inner product $\langle-,-\rangle$. The image of a point $x\in J(K)$ in $V$ will be denoted by $x'\in V$.

Let $h=\hat{h}(j(P))$ and note that for all $Q\in \Omega$ we have $\hat{h}(j(Q))=h$, by Lemma \ref{LemmaAutHt}. As we want to give an upper bound for $h$ and the quantity $M(X)/(2\tau)$ is positive (cf. Lemma \ref{LemmaPosM}), it suffices to consider the case $h>0$. Thus, we also assume $r\ge 1$. 

First we consider the case $r=1$. As $n\ge 3$, there are two distinct points $Q,R\in \Omega$ with $j(Q)'=j(R)'$ in $V\simeq \R$ (they have the same norm $h$). By Theorem \ref{ThmGap} we have $2h - 2gh\ge -M(X)$,  hence 
$$
h\le \frac{M(X)}{2(g-1)} = \frac{M(X)}{2g\tau}.
$$
Let us now consider the case $r\ge 2$. From the definition of $\theta(r,n)$, there are two distinct points $Q,R\in \Omega$ whose angle $\alpha$ in $V$ satisfies
$$
\cos\alpha \ge \cos\theta(r,n) 
$$
even if the image of $\Omega$ in $V$ has less than $n$ elements (in that case, $\alpha=0$). By Theorem \ref{ThmGap} we deduce
$$
M(X)\ge 2gh\cos\alpha - 2h \ge 2h\cdot (g\cos\theta(r,n) -1) = 2\tau g h.
$$
Since $\tau>0$, we get the result.
\end{proof}


\section{Computations}\label{SecComputations}

\subsection{Genus $2$ curves over $\Q$}\label{SecComputations1}

To illustrate the applicability of Theorem \ref{ThmMain}, in this section we will work out an example in the simplest non-trivial case: genus $2$ hyperelliptic curves over $\Q$. In this case, the constant $M(X)$ takes the form
$$
\begin{aligned}
M(X)&=2\delta(X_\C)+6\sum_p\phi_p\log p + 8(3\log 2 + 8)\\
&< 2\delta(X_\C)+6\sum_p\phi_p\log p +80.64.
\end{aligned}
$$
where $X_\C$ is the Riemann surface of the complex points of $X$. Furthermore, if one wants to use Theorem \ref{ThmDeltaH} to bound $\delta(X_\C)$ in terms of the Faltings height $h(J):=h_\Q(J)$ of the jacobian $J$ of $X$, one obtains
\begin{equation}\label{EqnBdMH}
\begin{aligned}
M(X) &< 24 h(J)+16\log (2\pi) + 6\sum_p\phi_p\log p +80.64\\
&< 24 h(J)+6\sum_p\phi_p\log p + 110.05.
\end{aligned}
\end{equation}

We will consider $X$ as given by a hyperelliptic equation
$$
y^2 = a_6x^6+a_5x^5+a_4x^4+a_3x^3+a_2x^2+a_1x+a_0
$$
with integral coefficients. The \emph{height constant} studied in \cite{StollHtCt}, which we denote by $c_X$ (although it depends on the chosen equation), satisfies that for every $P\in X(K)$ 
$$
2h(x(P)) \le \hat{h}(j(P)) +c_X.
$$
where $x(P)$ is the $x$-coordinate of $P$. In the notation of \cite{StollHtCt} this can be seen because the first $3$ coordinates of the Kummer map are given by $1,2x(P), x(P)^2$, (see Chapter 2 in \cite{CasselsFlynn}) and in particular one recovers the point $[1:x(P)^2]\in \Pro^1(\Q)$. 

In order to work out our example using Theorem \ref{ThmMain} we need:
\begin{itemize}
\item The automorphism group $G$ of $X$ over $\Q$. This will be obtained from Magma using the command AutomorphismGroup.
\item An upper bound for $\delta(X_\C)$. This can be obtained by direct computation of this invariant, or by bounding $h(J)$. 
\item Information about the bad fibres of $\Xcal/\Spec \Z$. This will be obtained using the Magma command RegularModel.
\item An upper bound for the height constant $c_X$. This will be obtained using the Magma command HeightConstant.
\end{itemize}

\subsection{An example} As promised in the introduction, let us now compute all the rational points of the genus $2$ curve
$$
X:\quad y^2=x^6+x^4+x^2+1.
$$

Here we have that $G=\Aut_\Q(X)\simeq D_4$ has order $8$, while $\rk J(\Q)=2$. From the table in the introduction we get $\tau > 0.20$, which is positive, so Theorem \ref{ThmMain} applies.

We take $H=G$. This group is generated by the involutions
$$
\begin{aligned}
s &: (x,y)\mapsto (x,-y)\\
t &: (x,y)\mapsto (-x,y)\\
u &: (x,y)\mapsto \left(\frac{1}{x},\frac{y}{x^3}\right).
\end{aligned}
$$
On checks that the only points in $X(\Q)$ with non-trivial $G$-stabilizer are (in weighted projective coordinates):
\begin{equation}\label{EqnPts1}
[1:-1:0],[1:1:0],[-1:-2:1],[-1:2:1],[1:-2:1],[1:2:1],[0:-1:1],[0:1:1].
\end{equation}
We claim that these are all the rational points of $X$. In what follows, we use Magma commands for the computations.

First we need to bound $M(X)$. Using BadPrimes we see that the only bad prime is $p=2$ and we need to compute $\phi_2$. Using RegularModel at $2$  we get that the special fibre at $2$ has $9$ components of multiplicities $[ 1, 1, 1, 1, 1, 1, 2, 2, 2 ]$ and intersection matrix
$$
\Mcal=\left[\begin{array}{rrrrrrrrr}
-2&0&0&0&0&0&1&0&0\\
0&-2&0&0&0&0&1&0&0\\
0&0&-2&0&0&0&1&0&0\\
0&0&0&-2&0&0&1&0&0\\
0&0&0&0&-2&0&0&1&0\\
0&0&0&0&0&-2&0&1&0\\
1&1&1&1&0&0&-3&0&1\\
0&0&0&0&1&1&0&-2&1\\
0&0&0&0&0&0&1&1&-2
\end{array}\right].
$$
The numbers off the diagonal are $0$ and $1$, so the fibre components $C_j$ are geometrically irreducible and they have arithmetic genera $q_j\ge 0$. By Lemma \ref{LemmaSum0} we conclude that $q_j=0$ for each $j$ (one can also check this from the information provided by the command RegularModel). With this we can write the systems $\Xi_k$ for $k=1,...,6$ as these indices are the only cases where $C_k$ has multiplicity $1$. Up to symmetry, the only two relevant systems are $\Xi_1$ and $\Xi_5$, namely
$$
\Mcal \left[\begin{array}{r}
b_1^{(1)}\\
b_2^{(1)}\\
\vdots \\
b_9^{(1)}
\end{array}\right] = \left[\begin{array}{r}
-2\\
0\\
0\\
0\\
0\\
0\\
1\\
0\\
0
\end{array}\right]\quad \mbox{ and }\quad 
\Mcal \left[\begin{array}{r}
b_1^{(5)}\\
b_2^{(5)}\\
\vdots \\
b_9^{(5)}
\end{array}\right] = \left[\begin{array}{r}
0\\
0\\
0\\
0\\
-2\\
0\\
1\\
0\\
0
\end{array}\right]
$$
under the additional conditions $\sum_jb^{(1)}_j=0$ and $\sum_jb^{(5)}_j=0$. The first one gives
$$
\left[\sum_j b^{(1)}_jC_j\, . \, \sum_j b^{(1)}_jC_j\right] = -2
$$
and the second one gives
$$
\left[\sum_j b^{(5)}_jC_j\, . \, \sum_j b^{(5)}_jC_j\right] = -4.
$$
Therefore $\phi_2=4$.

Instead of directly bounding $\delta(X_\C)$, in this example it turns out to be easier to use the Faltings height approach provided by \eqref{EqnBdMH}. For this, the command RichelotIsogenousSurfaces for the jacobian of $X$ shows that $J$ has an isogeny of degree  $4$ to the abelian surface $E\times E$ where $E$ is the elliptic curve
$$
E:\quad y^2=x^3+x^2+x+1.
$$
Using FaltingsHeight we obtain 
$$
h(E) = -0.856...< -0.85. 
$$
Hence
$$
h(J)\le 2h(E)+\frac{\log 4}{2} < -1.
$$
Putting all together, from \eqref{EqnBdMH} we deduce
$$
M(X)<  24\cdot h(J)+ 6\phi_2\log 2 +110.05 <103.
$$
Theorem \ref{ThmMain} finally gives that all $P\in X(\Q)$ having non-trivial stabilizer satisfy
$$
\hat{h}(j(P))\le \frac{M(X)}{2q\tau} < \frac{103}{4\cdot 0.2 } =128.75.
$$
Using the command HeightConstant we obtain that the height constant for $J$ can be taken as $c_X=4.08$. Hence all the points $P$ under consideration satisfy
$$
h(x(P))< \frac{1}{2}(\hat{h}(j(P)) + 4.08)< 67.
$$

This bound is well within the range of search using the Mordell Weil sieve. In fact, in \cite{StollAverage} the Mordell--Weil sieve is used to search for all the rational points of naive multiplicative height up to $10^{1000}$ (hence, with $h(x(P))<2302$) of genus $2$ hyperelliptic curves with integer coefficients of absolute value up to $3$, and Mordell--Weil rank $2$ of the jacobian. The result is that in all these cases the naive multiplicative height of $x(P)$ is at most $209040$, see also \cite{StollCurves}.

The instruction Points(X : Bound:=209040) search for all these points and returns the list that we already had. Therefore, the points in \eqref{EqnPts1} give the complete list of rational points in $X(\Q)$.


\section{Acknowledgments}

N.G.-F. was supported by ANID Fondecyt Regular grant 1211004 from Chile. H.P. was supported by ANID Fondecyt Regular grant 1230507 from Chile. The authors thank J. Balakrishnan, M. Stoll, A. Sutherland, and J. Vonk for comments and useful suggestions on an earlier version of this work.


\end{document}